\documentclass[11pt]{amsart}

\usepackage{amssymb}
\usepackage{mathpazo}

\usepackage{mathrsfs}

\usepackage[all]{xy}
\usepackage[headings]{fullpage}
\usepackage{paralist}

\usepackage[colorlinks=true,linkcolor=blue, citecolor=blue, %
                urlcolor=blue, pagebackref=true]{hyperref}
\makeatletter

\newcounter{maintheorem}[equation]
\def\themaintheorem{\thesection.\@arabic \c@maintheorem}
\@addtoreset{equation}{section}
\def\theequation{\thesection.\@arabic \c@equation}
\def\theenumi{\@alph\c@enumi}
\def\theenumii{\@roman\c@enumii}

\makeatother

\theoremstyle{plain}
\newtheorem{theorem}[equation]{Theorem}
\newtheorem{lemma}[equation]{Lemma}
\newtheorem{corollary}[equation]{Corollary}
\newtheorem{proposition}[equation]{Proposition}

\theoremstyle{definition}

\newtheorem{remark}[equation]{Remark}
\newenvironment{remarkbox}[1][]{%
    \begin{remark}[#1] \pushQED{\qed}}{\popQED \end{remark}}

\newtheorem{example}[equation]{Example}
\newenvironment{examplebox}[1][]{%
    \begin{example}[#1] \pushQED{\qed}}{\popQED \end{example}}

\newtheorem{definition}[equation]{Definition}

\newtheorem{notation}[equation]{Notation}
\newenvironment{notationbox}[1][]{%
    \begin{notation}[#1]\pushQED{\qed}}{\popQED \end{notation}}

\newtheorem{discussion}[equation]{Discussion}
\newenvironment{discussionbox}[1][]{%
    \begin{discussion}[#1]\pushQED{\qed}}{\popQED \end{discussion}}

\newtheorem{observation}[equation]{Observation}

\newtheorem{construction}[equation]{Construction}

\newcommand{\calF}{\mathcal F}

\newcommand{\calG}{\mathcal G}

\newcommand{\frakm}{{\mathfrak m}}

\newcommand{\frakn}{{\mathfrak n}}

 \let\strSh\scrO

\DeclareMathOperator{\scrR}{\mathscr R}

\newcommand{\calZ}{\mathcal Z}

\newcommand{\naturals}{\mathbb{N}}
\newcommand{\ints}{\mathbb{Z}}

\newcommand{\reals}{\mathbb{R}}
\newcommand{\complex}{\mathbb{C}}

\newcommand\hssymb{\mathfrak z}
\def\to{\longrightarrow}

\DeclareMathOperator{\length}{length}
\DeclareMathOperator{\coker}{coker}

\DeclareMathOperator{\projective}{\mathbb{P}}

\DeclareMathOperator{\Hom}{Hom}
\DeclareMathOperator{\Tor}{Tor}

\DeclareMathOperator{\Spec}{Spec}
\DeclareMathOperator{\Proj}{Proj}
\DeclareMathOperator{\depth}{depth}
\DeclareMathOperator{\type}{type}
\DeclareMathOperator{\homology}{H}
\newcommand{\define}[1]{\emph{#1}}
\newcommand{\minus}{\ensuremath{\smallsetminus}}
\DeclareMathOperator{\image}{Im}
\DeclareMathOperator{\socle}{soc}
\DeclareMathOperator{\Ext}{Ext}

\makeatletter
\def\RDerChar{\mathbf{R}}
\def\RDer{\@ifnextchar[{\R@Der}{\ensuremath{\RDerChar}}}
\def\R@Der[#1]{\ensuremath{\RDerChar^{#1}}}
\makeatother

\begin{document}

\title[Conjectures of Itoh and of Lipman]
{On conjectures of Itoh and of Lipman on the cohomology of normalized
blow-ups}

\author[Kummini]{Manoj Kummini}
\address{Chennai Mathematical Institute, Siruseri, Tamilnadu 603103. India}
\email{mkummini@cmi.ac.in}

\author[Masuti] {Shreedevi K. Masuti}
\address{Institute of Mathematical Sciences, Chennai, Tamilnadu 600113. India}
\email{shreedevikm@imsc.res.in}

\date{\today}

\thanks{MK was partially supported by a grant from Infosys Foundation.
SKM was supported by Department of Atomic Energy, Government of India.} 

\keywords{Blow-up algebras, cohomology, integral closure, adjoints}

\subjclass[2010]{Primary: 13A30, 13D45,13B22; Secondary: 13H10, 13H15}

\begin{abstract}
Let $(R, \frakm, \Bbbk)$ be a Noetherian three-dimensional Cohen-Macaulay
analytically unramified ring and $I$ an $\frakm$-primary $R$-ideal.
Write $X = \Proj\left(\oplus_{n \in \naturals} \overline{I^n}t^n\right)$.
We prove some consequences of the vanishing of $\homology^2(X, \strSh_X)$,
whose length equals the the constant term $\bar e_3(I)$ of the normal
Hilbert polynomial of $I$. Firstly, $X$ is Cohen-Macaulay. Secondly, if the
extended Rees ring
$A := \oplus_{n \in \ints} \overline{I^n}t^n$ is not Cohen-Macaulay, and
either $R$ is equicharacteristic or $\overline{I} = \frakm$, then 
$\bar e_2(I) -
\length_R\left(\frac{\overline{I^2}}{I\overline{I}}\right) \geq
3$; this estimate is proved using Boij-S\"oderberg theory of coherent
sheaves on $\projective^2_\Bbbk$. The two results above are related to a
conjecture of S.~Itoh (J.~Algebra, 1992). 
Thirdly, 
$\homology^2_E(X, I^m\strSh_X) = 0$ for all integers $m$,
where $E$ is the exceptional divisor in $X$.
Finally, if additionally $R$ is regular and $X$ is pseudo-rational, then the
adjoint ideals $\widetilde{I^n}, n \geq 1$ satisfy 
$\widetilde{I^n} = I\widetilde{I^{n-1}}$ for all $n \geq 3$. The last two
results are related to conjectures of J.~Lipman (Math. Res. Lett., 1994).
\end{abstract}

\maketitle 

\section{Results}
\label{sec:results}

Let $R$ be a commutative Noetherian local ring and 
$R = I_0 \supseteq I_1 \supseteq I_2 \supseteq \cdots$ a filtration of $R$
by ideals.
Several authors have studied homological properties of the \define{Rees ring} 
$\oplus_{n \in \naturals}I_nt^n$ 
of this filtration ($t$ is an indeterminate). 
Duong Qu{\^o}c Vi{\^e}t~\cite[Theorem~1.1]{VietCMRees1993},
and S.~Goto and K.~Nishida~\cite[Theorem~(1.1)]{GotoNishida1994}
independently determined a criterion for the
Cohen-Macaulayness of the Rees ring (under a mild hypothesis) in terms of
the local cohomology modules of the \define{associated graded ring}
$\oplus_{n \in \naturals}(I_n/I_{n+1})t^n$;
see also~\cite[Theorem~4.4]{LipCMgraded94}.

Take a Noetherian three-dimensional Cohen-Macaulay 
local ring $(R, \frakm, \Bbbk)$ 
whose completion has no nilpotents (i.e., $R$ is
\define{analytically unramified})
and an $\frakm$-primary $R$-ideal $I$, and consider the filtration 
$\overline{I^n}, n \in \naturals$; here $\overline{(.)}$ denotes
integral closure. In this situation, the criterion of 
Vi{\^e}t and Goto-Nishida is that the Rees ring 
$\scrR : =\oplus_{n \in \naturals} \overline{I^n}t^n$
is Cohen-Macaulay if and only if the
associated graded ring 
$G := \oplus_{n \in \naturals}(\overline{I^n}/\overline{I^{n+1}})t^n$
is Cohen-Macaulay and $\homology^3_{G_+}(G)_n = 0$ for every $n \geq 0$.
($G_+$ is the $G$-ideal $\oplus_{n \geq 1}G_n$.)

Write $X$ for $\Proj \scrR$.
There is a natural projective birational morphism $f : X \to \Spec R$.
The sheaf of ideals $I\strSh_X := 
\image\left(I  \otimes_R \strSh_X \to \strSh_X\right)$ on $X$ is invertible
and ample,
and defines an effective Cartier divisor $E \subseteq X$. Note that 
$E = \Proj G$ and that it is the exceptional locus of
$f$. For any coherent sheaf $\calF$ on $X$ and $m \in \ints$, we will write
$\calF(m) = \calF \otimes_{\strSh_X} (I\strSh_X)^m$. One can show that 
$\homology^3_{G_+}(G)_n = \homology^2(E, \strSh_E(n)) = 0$ for every $n
\geq 0$ if and only if $\homology^2(X, \strSh_X) = 0$. Hence, assuming that 
$\homology^2(X, \strSh_X) = 0$, one may ask for sufficient and necessary
conditions for $G$ to be Cohen-Macaulay.

There is another way of looking at this. Write
\[
\sum_{i=0}^3 (-1)^i {\bar e_i}(I)\binom{n+3-i}{3-i}
\]
for the \define{normal Hilbert polynomial} of $I$, i.e., the polynomial
function that gives $\length_R (R/\overline{I^{n+1}})$ for all
sufficiently large $n$. The coefficients ${\bar e_i}(I)$ are integers, and are
often called the \define{normal Hilbert coefficients} of $I$.
Let $A := \oplus_{n \in \ints} \overline{I^n}t^n$ be the \define{extended
Rees ring} of this filtration.  Note that $G = A/(t^{-1})$; hence $G$ is
Cohen-Macaulay if and only if $A$ is Cohen-Macaulay. 
Let $\frakn$ be the $A$-ideal $ItA + (t^{-1})A$.
S.~Itoh~\cite[p.~114]{ItohNormalHilbCoeffs92} showed that
$\bar{e}_3(I) = \length_R(\homology^2(X, \strSh_X))$
(Note that for every coherent $\calF$ on $X$, $\homology^i(X, \calF)$ is a
finite-length $R$-module for every $i > 0$.)
He conjectured that if $R$ is additionally Gorenstein and 
$\bar{e}_3(I) = 0$, then $A$ is Cohen-Macaulay~\cite[Conjecture, p.~116
and Corollary~16]{ItohNormalHilbCoeffs92}.

Our results, of which the following theorem is the principal one, concern
the vanishing of $\bar{e}_3(I) = \length_R(\homology^2(X, \strSh_X))$. 
For the benefit of the reader, and for putting certain assertions in 
context, we have stated some of them in various equivalent forms.

\begin{theorem}
\label{theorem:itoh}
Suppose that $\bar e_3(I) = 0$ (i.e., $\homology^2(X, \strSh_X) = 0$).
Then:
\begin{enumerate}

\item \label{enum:thmitohHthreelocalvanishes}
$\homology^3_\frakn(A) = 0$, and hence $X$ and $E$ are Cohen-Macaulay,
i.e., every local ring of these schemes is Cohen-Macaulay.

\item \label{enum:thmitohboundForEtwoBar}
$\bar{e}_2(I)=
\length_R(\homology^2(E, \strSh_E(-1))) =
\length_R(\homology^2(X, \strSh_X(-1)))$.
Further, there is an inclusion $\homology^2(X, \strSh_X(-1)) \subseteq
(0 :_{\homology^3_\frakm(R)} \overline{I})$.

\item \label{enum:thmitohexactseq}
Suppose that $I$ has a reduction generated by three elements $x,y,z$.
Then there exist exact sequences (which are restatements of each other):
\[
\xymatrix@R=1ex{
0 \ar[r] & \frac{\overline{I^2}}{(x,y,z)\overline{I}} \ar[r]  &  
\homology^2(X, \strSh_X(-1)) \ar[r]& 
\homology^1(X, \strSh_X(1))^{\oplus 3} \ar[r]^-\phi& 
\homology^1(X, \strSh_X(2)) \ar[r] &  0; \\
0 \ar[r] & \frac{\overline{I^2}}{(x,y,z)\overline{I}} \ar[r]  &  
\homology^3_{It}(A)_{-1} \ar[r]& 
\left(\homology^2_{It}(A)_{1}\right) ^{\oplus 3} \ar[r]^-\phi& 
\homology^2_{It}(A)_{2} \ar[r] &  0.
}
\]
where $\phi$ arises from the Koszul complex on $xt,yt,zt \in A$. Further, $A$ is
Cohen-Macaulay if and only if $\homology^1(X, \strSh_X(1))=0$.

\item \label{enum:thmitohEquichar}
Additionally suppose either that $R$ is equicharacteristic or that
$\overline{I} = \frakm$. If $A$ is not Cohen-Macaulay, then $\length_R(\ker
\phi) \geq 3$.
\end{enumerate}
\end{theorem}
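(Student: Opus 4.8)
The plan is to pull a length formula for $\ker\phi$ out of part~\ref{enum:thmitohexactseq}, to transport the cohomology of $E$ to $\projective^2_\Bbbk$ via a finite morphism, and then to invoke the Eisenbud--Schreyer (cohomology-table) form of Boij--S\"oderberg theory. Throughout I may assume $R$ is complete; this preserves the Cohen-Macaulay and analytically unramified hypotheses, the normal Hilbert coefficients, the finite-length modules $\homology^i(X,\strSh_X(n))$ and $\ker\phi$, and the fact that $x,y,z$ is a reduction of $I$, and it is needed only to supply a coefficient field in the equicharacteristic case. Put $\ell_n=\length_R\homology^1(X,\strSh_X(n))$. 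Part~\ref{enum:thmitohexactseq} says $\phi$ is surjective, so $\length_R(\ker\phi)=3\ell_1-\ell_2$, and ``$A$ not Cohen-Macaulay'' means precisely $\homology^1(X,\strSh_X(1))\neq 0$. Since $\homology^0(X,\strSh_X(n))=\overline{I^n}$ for the normal blow-up, since $\homology^3(X,\calG)=0$ for every coherent $\calG$ ($X\to\Spec R$ has affine target and all its fibres have dimension $\leq 2$), and since $\homology^2(E,\strSh_E(m))=0$ for $m\geq 0$, the sheaf sequences $0\to\strSh_X(m+1)\to\strSh_X(m)\to\strSh_E(m)\to 0$ yield $\homology^2(X,\strSh_X(m))=0$ for $m\geq 0$ together with an exact sequence
\[
0\to \overline{I}/\overline{I^2}\to \homology^0(E,\strSh_E(1))\to \homology^1(X,\strSh_X(2))\to \homology^1(X,\strSh_X(1))\to \homology^1(E,\strSh_E(1))\to 0 .
\]
Using that $\bar e_3=0$ also forces the reduction number of $\{\overline{I^n}\}$ with respect to $(x,y,z)$ to be at most $2$, one has $\overline{I^n}=\homology^0(X,\strSh_X(n))\twoheadrightarrow\homology^0(E,\strSh_E(n))$ for $n\geq 1$, so the first map above is an isomorphism; hence $\ell_1-\ell_2=\length_R\homology^1(E,\strSh_E(1))$ and $\ell_1\geq\length_R\homology^1(E,\strSh_E(1))$, and therefore $\length_R(\ker\phi)=2\ell_1+(\ell_1-\ell_2)\geq 3\,\length_R\homology^1(E,\strSh_E(1))$. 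Thus it suffices to prove $\homology^1(E,\strSh_E(1))\neq 0$.

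Next I build a finite morphism $E\to\projective^2_\Bbbk$. Because $x,y,z$ is a reduction of $I$, $\scrR$ is module-finite over $R[xt,yt,zt]$, hence $\overline{I^{n+1}}=(x,y,z)\overline{I^n}$ for $n\gg 0$, and so the images $\xi_1,\xi_2,\xi_3\in G_1$ of $xt,yt,zt$ form a homogeneous system of parameters of $G$. If $\overline{I}=\frakm$ then $G_0=\Bbbk$, while if $R$ is equicharacteristic then $G_0=R/\overline I$ contains a coefficient field $\Bbbk$; in either case $G$ is module-finite over the subring $S:=\Bbbk[\xi_1,\xi_2,\xi_3]$, and $S$ is a polynomial ring because $\dim G=3$. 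This gives a finite morphism $\pi\colon E=\Proj G\to\Proj S=\projective^2_\Bbbk$ with $\pi^*\strSh_{\projective^2}(1)=\strSh_E(1)$; set $\calF:=\pi_*\strSh_E$, a coherent sheaf on $\projective^2_\Bbbk$. Since $\pi$ is affine, $\homology^i(\projective^2,\calF(n))\cong\homology^i(E,\strSh_E(n))$ for all $i$ and $n$. (As $E$ is Cohen-Macaulay by part~\ref{enum:thmitohHthreelocalvanishes} and $\projective^2$ is regular, $\pi$ is even flat and $\calF$ locally free, but this will not be needed.)

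Now the Boij--S\"oderberg step. From $\bar e_3=0$ we get $\homology^2(\projective^2,\calF(n))=0$ for all $n\geq 0$, and from $\homology^1(X,\strSh_X(1))\neq 0$, feeding the sequences above upward (using $\homology^2(X,\strSh_X(m))=0$ for $m\geq 0$) shows $\homology^1(\projective^2,\calF(n_0))\neq 0$ for some $n_0\geq 1$. By the Eisenbud--Schreyer theorem, the cohomology table of $\calF$ is a nonnegative $\reals$-linear combination of cohomology tables of supernatural sheaves on $\projective^2$. The vanishing of $\homology^2(\projective^2,\calF(n))$ for $n\geq 0$ forces every supernatural summand that occurs either to have dimension $\leq 1$ (so no $\homology^2$ at all) or to have dimension $2$ with second root $\leq 0$. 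Choose a summand $\calF_{\alpha_0}$ with $\homology^1(\projective^2,\calF_{\alpha_0}(n_0))\neq 0$: the twists in which a supernatural sheaf on $\projective^2$ has nonzero $\homology^1$ form an interval of integers whose right endpoint is (first root)$\,-1$ and whose left endpoint is (second root)$\,+1\leq 1$ (or $-\infty$, in the one-dimensional case). This interval contains $n_0\geq 1$, hence it contains $1$, so $\homology^1(\projective^2,\calF_{\alpha_0}(1))\neq 0$; since all coefficients are nonnegative, $\homology^1(\projective^2,\calF(1))\neq 0$, i.e.\ $\homology^1(E,\strSh_E(1))\neq 0$, and by the first paragraph $\length_R(\ker\phi)\geq 3$.

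The main obstacle is this last step: only one entry of the $\homology^1$-strand of $\calF$ in positive twists is known to be nonzero, and it must be propagated back to $n=1$ using nothing but the $\homology^2$-vanishing --- which works exactly because within each supernatural summand the nonvanishing $\homology^1$-twists are a contiguous interval whose left end is pinned at $\leq 1$ by the second root. Getting the root/interval bookkeeping exactly right in the degenerate cases --- summands $\strSh_{\projective^2}(d)$ and supernatural sheaves of dimension $0$ or $1$ --- is the delicate point. The auxiliary input in the first paragraph, surjectivity of $\homology^0(X,\strSh_X(n))\to\homology^0(E,\strSh_E(n))$ for $n\geq 1$ (equivalently, $G$ saturated in positive degrees), is a second use of $\bar e_3=0$ through the reduction-number bound, and it is precisely what improves the bound from $2\,\length_R\homology^1(E,\strSh_E(1))$ to $3\,\length_R\homology^1(E,\strSh_E(1))$.
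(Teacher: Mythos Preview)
Your argument has a genuine gap at a crucial point. You assert that ``$\bar e_3=0$ also forces the reduction number of $\{\overline{I^n}\}$ with respect to $(x,y,z)$ to be at most $2$.'' This is false: by Itoh's result (quoted in the paper's Discussion~\ref{discussionbox:itohfacts}), under the hypothesis $\bar e_3(I)=0$ the condition $\overline{I^{n+2}}=I^n\overline{I^2}$ for all $n\geq 0$ is \emph{equivalent} to $A$ being Cohen--Macaulay. Since part~\eqref{enum:thmitohEquichar} assumes $A$ is \emph{not} Cohen--Macaulay, the reduction number is strictly larger than $2$. Consequently, your surjectivity claim $\homology^0(X,\strSh_X(n))\twoheadrightarrow\homology^0(E,\strSh_E(n))$ for $n\geq 1$ is unjustified: it amounts to the injectivity of $t^{-1}\colon\homology^2_{\frakn}(A)_{n+1}\to\homology^2_{\frakn}(A)_n$ for $n\geq 1$ (equivalently $\homology^1_{G_+}(G)_n=0$ for $n\geq 1$), and this does not follow from $\bar e_3=0$ alone.

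Without that surjectivity, your length bookkeeping only yields $\length_R(\ker\phi)=3\ell_1-\ell_2\geq 2\,\length_R\homology^1(E,\strSh_E(1))$, not $\geq 3$. Concretely, the case $\ell_1=\ell_2=1$ would give $\length_R(\ker\phi)=2$, and nothing in your argument excludes it. To rule it out one is forced back to the sheaf $\calF=\pi_*\strSh_E$ on $\projective^2$: Proposition~\ref{proposition:HoneNonzeroOnContigSets} shows that $\ell_1=\ell_2=1$ entails $\dim_\Bbbk\homology^1(\calF(1))=\dim_\Bbbk\homology^1(\calF(2))=1$, and eliminating this configuration is exactly the content of the delicate step Proposition~\ref{proposition:BSthyCorNEW}\eqref{proposition:BSthyCorNEWLengthAtLeastThree}, which uses graded local duality and an analysis of the minimal free resolution of a two-dimensional summand of $\Ext^1_S(M,S(-3))$. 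The paper therefore takes a different route: it uses diagram~\eqref{equation:phidiag} and the snake lemma to produce a surjection $\ker\phi\twoheadrightarrow\ker\phi^{\strSh_E}_2$, and then proves directly that $\dim_\Bbbk\ker\phi^{\strSh_E}_2\geq 3$ via Proposition~\ref{proposition:BSthyCorNEW}\eqref{proposition:BSthyCorNEWNonNegRationals} and~\eqref{proposition:BSthyCorNEWLengthAtLeastThree}. Your Boij--S\"oderberg ``propagation'' step (showing $\homology^1(\calF(1))\neq 0$ from $\homology^1(\calF(n_0))\neq 0$) is correct and parallels Proposition~\ref{proposition:HoneNonzeroOnContigSets}, but by itself it is not enough; the hard part is precisely the exclusion of $\dim_\Bbbk\ker\psi_2=2$, which your approach sidesteps by an assumption that contradicts the hypothesis.
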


That $\length_R(\ker \phi) > 1$ if it is non-zero 
(without the hypothesis of
Theorem~\ref{theorem:itoh}\eqref{enum:thmitohEquichar})
can also be obtained from~\cite[Theorem~2]{ItohNormalHilbCoeffs92}.
The extra hypothesis of
Theorem~\ref{theorem:itoh}\eqref{enum:thmitohexactseq} is not
really a restriction. 
We may replace $R$ by a suitable extension ring (so
that the residue field is infinite), and $I$ by the extended ideal, without
effecting the numerical data that is part of the hypothesis.

Itoh proved a special case of the conjecture: with
$\overline{I} = \frakm$~\cite[Theorem~3]{ItohNormalHilbCoeffs92}.
Recently A.~Corso, C.~Polini and
M.~E.~Rossi~\cite[Theorem~3.3]{CorsoPoliniRossiNormalHilbCoeffs2014}
extended Itoh's result (with $\overline{I} = \frakm$) to the case of
arbitrary Cohen-Macaulay $R$ and $I$ satisfying
$\type(R) \leq \length_R(\overline{I^2}/\frakm I) + 1$. 
(Here $\type(R)$ is the \define{type} of $R$, i.e., 
$\dim_\Bbbk \Ext_R^3(\Bbbk, R)$, which also equals 
$\dim_\Bbbk \socle(\homology^3_\frakm(R))$.)
The following corollary of Theorem~\ref{theorem:itoh}
strengthens this result :
\begin{corollary}
\label{corollary:cpr}
Suppose that $\bar e_3(\frakm) = 0$.
\begin{enumerate}
\item %
$\bar e_2(\frakm) \leq \type(R)$.
\item %
$A$ is Cohen-Macaulay if
$\bar e_2(\frakm) \leq \length_R(\overline{I^2}/\frakm I) + 2$
for any ideal $I$ such that $\overline{I} = \frakm$. 
\end{enumerate}
\end{corollary}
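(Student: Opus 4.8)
The plan is to obtain both parts of Corollary~\ref{corollary:cpr} formally from Theorem~\ref{theorem:itoh}. The underlying observation is that when $\overline{I} = \frakm$ one has $\overline{I^n} = \overline{\frakm^n}$ for every $n$, so that the normalized Rees algebra, the schemes $X$ and $E$, the extended Rees ring $A$, and the whole normal Hilbert polynomial built from $I$ agree with those built from $\frakm$; in particular $\bar e_i(I) = \bar e_i(\frakm)$ for all $i$, and the standing hypothesis $\bar e_3(\frakm) = 0$ makes all four conclusions of Theorem~\ref{theorem:itoh} available for any such $I$. (Note also that $\overline I = \frakm$ forces $\sqrt I = \frakm$, so $I$ is automatically $\frakm$-primary.)

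For part~(1) I would apply Theorem~\ref{theorem:itoh}\eqref{enum:thmitohboundForEtwoBar} with $I = \frakm$: it gives $\bar e_2(\frakm) = \length_R(\homology^2(X, \strSh_X(-1)))$ together with an inclusion $\homology^2(X, \strSh_X(-1)) \subseteq (0 :_{\homology^3_\frakm(R)} \frakm) = \socle(\homology^3_\frakm(R))$. Since $\socle(\homology^3_\frakm(R))$ is a $\Bbbk$-vector space of dimension $\type(R)$, its $R$-length is $\type(R)$, and hence $\bar e_2(\frakm) \le \type(R)$.

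For part~(2), suppose $I$ is an ideal with $\overline{I} = \frakm$ for which $\bar e_2(\frakm) \le \length_R(\overline{I^2}/\frakm I) + 2$, and assume for contradiction that $A$ is not Cohen-Macaulay. Passing to the faithfully flat local extension $R \to R[u]_{\frakm R[u]}$ leaves all the data in sight untouched (being $\frakm$-primary, being analytically unramified, the coefficients $\bar e_i$, Cohen-Macaulayness of $A$, the integral closures $\overline{I^n}$, and the length $\length_R(\overline{I^2}/\frakm I)$ are all preserved — this is exactly the content of the remark following Theorem~\ref{theorem:itoh}), so I may assume $\Bbbk$ infinite; then $I$ has a reduction $(x,y,z)$ generated by three elements, which is also a reduction of $\overline{I} = \frakm$, whence $(x,y,z)\overline{I} = (x,y,z)\frakm$. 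Condensing the first exact sequence of Theorem~\ref{theorem:itoh}\eqref{enum:thmitohexactseq} gives
\[
0 \to \frac{\overline{I^2}}{(x,y,z)\frakm} \to \homology^2(X, \strSh_X(-1)) \to \ker\phi \to 0,
\]
so, using part~\eqref{enum:thmitohboundForEtwoBar} to identify $\length_R(\homology^2(X, \strSh_X(-1))) = \bar e_2(\frakm)$,
\[
\length_R(\ker\phi) = \bar e_2(\frakm) - \length_R\!\left(\frac{\overline{I^2}}{(x,y,z)\frakm}\right).
\]
Because $(x,y,z)\frakm \subseteq \frakm I \subseteq \overline{I^2}$ (the last inclusion since $\frakm I = \overline{I}\cdot I \subseteq \overline{I}\cdot\overline{I} \subseteq \overline{I^2}$), additivity of length gives $\length_R(\overline{I^2}/(x,y,z)\frakm) \ge \length_R(\overline{I^2}/\frakm I)$, and therefore $\length_R(\ker\phi) \le \bar e_2(\frakm) - \length_R(\overline{I^2}/\frakm I) \le 2$. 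On the other hand $\overline{I} = \frakm$, so Theorem~\ref{theorem:itoh}\eqref{enum:thmitohEquichar} applies and forces $\length_R(\ker\phi) \ge 3$ — a contradiction. Hence $A$ is Cohen-Macaulay.

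The computations above are purely formal; the only step that takes a moment of care is the reduction to an infinite residue field in part~(2), where one must check that the extension $R \to R[u]_{\frakm R[u]}$ genuinely preserves the integral closures $\overline{I^n}$ and the finite lengths entering the hypothesis and conclusion. Even this is routine — it is precisely what the remark after Theorem~\ref{theorem:itoh} records — so I do not expect any real obstacle: all the substantive work is already packaged in Theorem~\ref{theorem:itoh}.
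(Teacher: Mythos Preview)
Your proposal is correct and follows exactly the approach the paper indicates: the paper's proof is the single line ``Use Theorem~\ref{theorem:itoh}\eqref{enum:thmitohboundForEtwoBar}, \ref{theorem:itoh}\eqref{enum:thmitohexactseq} and~\ref{theorem:itoh}\eqref{enum:thmitohEquichar},'' and you have accurately spelled out the intended computation (including the inequality $(x,y,z)\frakm \subseteq \frakm I$ that bridges the exact sequence of part~\eqref{enum:thmitohexactseq} to the hypothesis on $\length_R(\overline{I^2}/\frakm I)$).
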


Theorem~\ref{theorem:itoh} does not settle Itoh's conjecture. 
We hope that proving increasingly better lower bounds for
$\length_R(\ker \phi)$
would give a method of checking whether $A$ is Cohen-Macaulay.
General lower bounds for the coefficients $\bar e_i(I)$ (especially for
$i=1,2$) have been established by Itoh~\cite{ItohIntClRegSeq88,
ItohNormalHilbCoeffs92}, C.~Huneke~\cite{HuneHilbFnSymbPwrs1987} and 
Corso-Polini-Rossi~\cite{CorsoPoliniRossiNormalHilbCoeffs2014}.

We now exhibit the relation of the hypothesis that 
$\length_R(\homology^2(X, \strSh_X))=0$ to two conjectures of J.~Lipman. 
Suppose that $S$ is a regular
local ring.
Denote the \define{adjoint}~\cite[Definition (1.1)]{LipmanAdjoints94} of an
$S$-ideal $J$ by $\widetilde{J}$.
In~\cite[Conjecture~(1.6)]{LipmanAdjoints94}, he asserted that 
$\widetilde{J^n} = J\widetilde{J^{n-1}}$ for all $n \geq \ell(J)$, where
$\ell(J)$ is the analytic spread of $J$. 
In~\cite[Conjecture~(2.2)]{LipmanAdjoints94} (from which the first
conjecture would follow), he asserted that if $Y$ is any regular $S$-scheme
that is
obtained by a sequence of blow-ups at non-singular centres and on which
$J\strSh_Y$ is invertible, then with $Y_0$ denoting the closed fibre 
of the morphism $Y \to \Spec S$, $\homology^i_{Y_0}(Y, J\strSh_Y^{-1}) = 0$
for all $i < \dim S$; see~\cite[Remark~(2.2.1)(b)]{LipmanAdjoints94}.
In dimension two, Lipman observes~\cite[(2.2.1)]{LipmanAdjoints94} that
these conjectures follow from his earlier work.
In dimensions three and higher,~\cite[Conjecture~(2.2)]{LipmanAdjoints94}
is known to hold for essentially finite-type rings over a field of
characteristic zero (proofs by 
S.~D.~Cutkosky in~\cite[Appendix A]{LipmanAdjoints94},
using the Kodaira-Ramanujam vanishing theorem, and by E.~Hyry and
O.~Villamayor~\cite [Theorem~2.7]{HyryVillamayorBrianconSkoda1998}, using
the Grauert-Riemenschneider vanishing theorem) and for finitely supported
ideals in regular local rings of arbitrary characteristic
(by Lipman~\cite[Theorem~2.1]
{LipmanVanishingForFinitelySupported2009}).
Our second result (which works in arbitrary characteristic in
dimension three) is the following:

\begin{theorem}
\label{theorem:lipman}
Adopt the notation and the hypothesis of Theorem~\ref{theorem:itoh}. Then 
$\homology^2_E(X, \strSh_X(m)) = 0$ for every $m \in \ints$. In particular,
if $R$ is regular and $I$ is such that $X$ is pseudo-rational. Then
$\widetilde{I^n} = I\widetilde{I^{n-1}}$ for every $n \geq \ell(I) = 3$.
\end{theorem}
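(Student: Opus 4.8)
The plan is to deduce the vanishing $\homology^2_E(X, \strSh_X(m)) = 0$ from the already-established Cohen-Macaulayness of $X$ and $E$ (Theorem~\ref{theorem:itoh}\eqref{enum:thmitohHthreelocalvanishes}) together with the vanishing of the top nonzero cohomology $\homology^2(X, \strSh_X)$ and its twists. First I would translate the local-cohomology-along-$E$ statement into ordinary sheaf cohomology via the long exact sequence of local cohomology supported on $E$:
\[
\cdots \to \homology^{i}(X, \strSh_X(m)) \to \homology^{i}(X\minus E, \strSh_X(m)) \to \homology^{i+1}_E(X, \strSh_X(m)) \to \homology^{i+1}(X,\strSh_X(m)) \to \cdots
\]
Since $f\colon X \to \Spec R$ is an isomorphism away from the closed point and $X\minus E \cong \Spec R \minus \{\frakm\}$, which is two-dimensional Cohen-Macaulay, $\homology^{i}(X\minus E, \strSh_X(m)) = \homology^i(\Spec R \minus\{\frakm\}, \strSh) = \homology^{i+1}_\frakm(R)$ vanishes for $i = 0, 1$ and is finite length for $i = 2$. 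Feeding this in, the key point is that $\homology^2_E(X,\strSh_X(m))$ sits in an exact sequence squeezed between $\homology^1(X\minus E, \strSh_X(m)) = 0$ and $\homology^2(X, \strSh_X(m))$; so it suffices to show that the map $\homology^2(X,\strSh_X(m)) \to \homology^2(X\minus E, \strSh_X(m)) = \homology^3_\frakm(R)$ is injective, equivalently that $\homology^2_E(X,\strSh_X(m))$ surjects onto nothing extra, equivalently — using the graded module structure over $A$ — that $\homology^3_\frakn(A)_m$ controls the situation.

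The cleaner route, which I would actually carry out, is to pass to the extended Rees ring. One has $\homology^2_E(X, \strSh_X(m)) = [\homology^3_{(It)}(A)]_m$ by the standard dictionary between cohomology of $\Proj$ along the exceptional divisor and the local cohomology of the (extended) Rees algebra with respect to the ideal generated by $It$; more precisely, $\oplus_{m \in \ints} \homology^2_E(X, \strSh_X(m))$ is a subquotient of $\homology^3_{(It)}(A)$, and the relevant graded pieces agree. Now $\homology^3_{(It)}(A)$ fits into the spectral sequence (or the exact sequence of the pair) relating $\homology^\bullet_{(It)}$, $\homology^\bullet_{(t^{-1})}$ and $\homology^\bullet_\frakn$, where $\frakn = ItA + (t^{-1})A$. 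From Theorem~\ref{theorem:itoh}\eqref{enum:thmitohHthreelocalvanishes} we have $\homology^3_\frakn(A) = 0$, and since $A$ has dimension $4$, $\homology^4_\frakn(A)$ is the only possibly nonvanishing top local cohomology; this forces $\homology^3_{(It)}(A)$ to be built from lower local cohomology of $A/(t^{-1}) = G$ and of the Rees algebra, all of which vanish in the Cohen-Macaulay range guaranteed by part~\eqref{enum:thmitohHthreelocalvanishes}. Tracking degrees carefully, and using Serre duality / the finite-length assertion for $\homology^i(X,\calF)$, $i>0$, one gets $\homology^2_E(X, \strSh_X(m)) = 0$ for every $m \in \ints$.

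For the second assertion, when $R$ is regular and $X$ is pseudo-rational, I would invoke Lipman's framework for adjoint ideals directly. Pseudo-rationality of $X$ gives $\homology^i(X, \strSh_X(-D)) $-type vanishing for the relevant canonical twists, and by~\cite[Remark~(2.2.1)(b)]{LipmanAdjoints94} the statement $\homology^2_E(X, \strSh_X(m)) = 0$ for all $m$ is precisely (the $\dim S = 3$ case of) the cohomological condition in~\cite[Conjecture~(2.2)]{LipmanAdjoints94} that implies $\widetilde{I^n} = I\widetilde{I^{n-1}}$ for $n \geq \ell(I)$. Since $I$ is $\frakm$-primary in a three-dimensional ring, $\ell(I) = \dim R = 3$, giving the stated range $n \geq 3$. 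Concretely: the adjoint $\widetilde{I^n}$ is computed as $f_*(\omega_{X/R} \otimes \strSh_X(-n)^{-1})$ up to the standard identification, and the vanishing $\homology^2_E(X,\strSh_X(m)) = 0$ lets one conclude that the natural map $I \cdot f_*(\cdots(-(n-1))) \to f_*(\cdots(-n))$ is surjective for $n \geq 3$ by chasing the cohomology sequence obtained from $0 \to \strSh_X(-E) \to \strSh_X \to \strSh_E \to 0$ twisted appropriately and pushed forward.

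\medskip
\noindent\textit{Main obstacle.} The delicate point is the bookkeeping in passing between $\homology^2_E(X, \strSh_X(m))$ and the graded local cohomology $\homology^3_{(It)}(A)_m$ — in particular making sure the identification is exact on the relevant graded strands and that no correction terms from $\homology^\bullet_{(t^{-1})}$ or from the non-positively-graded part of $A$ intrude. A secondary subtlety is verifying that pseudo-rationality of $X$ supplies exactly the vanishing that Lipman's deduction of $\widetilde{I^n} = I\widetilde{I^{n-1}}$ needs (as opposed to a slightly stronger or weaker statement), so that the hypothesis is used correctly rather than circularly.
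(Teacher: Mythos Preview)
Your opening paragraph is exactly the paper's argument: the long exact sequence of cohomology with supports on $E$, the identification $\homology^1(X\minus E,\strSh_X(m))=\homology^2_\frakm(R)=0$, and the reduction to showing that the map $\homology^2(X,\strSh_X(m))\to\homology^2(X\minus E,\strSh_{X\minus E})=\homology^3_\frakm(R)$ is injective. You even name the reason correctly in passing (``$\homology^3_\frakn(A)_m$ controls the situation''). The paper's proof simply finishes that sentence: since $\homology^3_\frakn(A)=0$ (Theorem~\ref{theorem:itoh}\eqref{enum:thmitohHthreelocalvanishes}), the sequence~\eqref{equation:itohTopExactSeq} becomes
\[
0 \to \homology^3_{It}(A) \to \homology^3_\frakm(R)[t,t^{-1}] \to \homology^4_\frakn(A)\to 0,
\]
and in degree $m$ this is precisely the required injection $\homology^2(X,\strSh_X(m))\hookrightarrow\homology^3_\frakm(R)$. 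That is the whole argument; your ``cleaner route'' is unnecessary and contains a genuine error. You write $\homology^2_E(X,\strSh_X(m))=[\homology^3_{It}(A)]_m$, but the standard identification is $\homology^2(X,\strSh_X(m))=[\homology^3_{It}(A)]_m$ (ordinary cohomology, not cohomology with supports). With this misidentification the rest of that paragraph cannot be made to work, and the appeal to ``lower local cohomology of $G$'' is not a correct description of what happens.

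For the second assertion, citing Lipman's implication $(2.2)\Rightarrow(1.6)$ is not quite enough: as the paper itself observes, the vanishing $\homology^2_E(X,\strSh_X(m))=0$ is not literally an instance of~\cite[Conjecture~(2.2)]{LipmanAdjoints94}, since $X$ need not arise from a sequence of blow-ups at non-singular centres. The paper instead argues directly. It first supplements $\homology^2_E=0$ with the easy check that $\homology^1_E(X,\strSh_X(m))=0$ for $m\le 0$ (from~\eqref{equation:lesHomologyWithSupports} and~\eqref{equation:negativeHOneOX}). Then, since $R$ is Gorenstein and $X$ Cohen--Macaulay, duality with supports converts these two vanishings into $\homology^1(X,\omega_X(m))=0$ for all $m$ and $\homology^2(X,\omega_X(m))=0$ for $m\ge 0$. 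Finally one runs the Koszul spectral sequence of Discussion~\ref{discussionbox:phim} with $\calG=\omega_X$ and $m\ge 3$: the ${'\!E_1}$-page collapses to a single row, giving
\[
\homology^0(X,\omega_X(m))=I\cdot\homology^0(X,\omega_X(m-1))\quad(m\ge 3),
\]
and pseudo-rationality of $X$ identifies $\homology^0(X,\omega_X(m))$ with $\widetilde{I^m}$ via~\cite[(1.3.1)]{LipmanAdjoints94}. Your sketch of ``chasing the cohomology sequence obtained from $0\to\strSh_X(-E)\to\strSh_X\to\strSh_E\to 0$'' does not by itself produce the surjection $I\widetilde{I^{n-1}}\twoheadrightarrow\widetilde{I^n}$; the Koszul argument on $\omega_X$ is what actually delivers it.
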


The first assertion of the above theorem is not
exactly a special case of~\cite[Conjecture~(2.2)]{LipmanAdjoints94}, while
the second one is an affirmative case of~\cite[Conjecture~(1.6)]{LipmanAdjoints94}.

A word about the proofs. First we prove the vanishing
$\homology^3_\frakn(A) = 0$ asserted as part of
Theorem~\ref{theorem:itoh}\eqref{enum:thmitohHthreelocalvanishes};
this result is vital to our proof and connects Theorems~\ref{theorem:itoh}
and~\ref{theorem:lipman}.
Theorem~\ref{theorem:itoh}\eqref{enum:thmitohHthreelocalvanishes}--%
\eqref{enum:thmitohexactseq} follow from this vanishing and some
homological algebra.
The non-vanishing of $\homology^1(X, \strSh_X(1))$ implies the
non-vanishing of $\homology^2(E, \strSh_E(1))$. This, taken with the
results obtained in Section~\ref{sec:CohOnPTwo}, gives the proof of
Theorem~\ref{theorem:itoh}\eqref{enum:thmitohEquichar}.
The first assertion of Theorem~\ref{theorem:lipman} is
deduced from the vanishing of $\homology^3_\frakn(A)$;
the second assertion follows from this, and a result of Lipman.

This paper is organized as follows. Various preliminary facts are reviewed
in Section~\ref{sec:prelims}. In Section~\ref{sec:CohOnPTwo}, we prove some
results on the cohomology groups of certain coherent sheaves on the
projective plane over a field. Section~\ref{sec:proofs} is devoted to the
proofs of the results asserted above. In Section~\ref{sec:furtherrmks}, we
collect some examples that illustrate the use of our results.

\section*{Acknowledgements}
We thank Jugal Verma for helpful comments throughout the preparation of
this manuscript, A.~J.~Parameswaran for examples that
clarified some of the arguments and Joseph Lipman for comments on an
earlier draft.

\section{Preliminaries}
\label{sec:prelims}

By standard arguments~\cite[Section~8.4]{SwHuIntCl06}, we will assume that
$\Bbbk$ is infinite, and hence that $I$ has a minimal reduction $(x,y,z)$.
(We note the lengths of the various finite-length modules and the
vanishing of cohomology and the invariants $\bar e_i(-)$ mentioned in
Section~\ref{sec:results} are unaffected.) Further, the ring $A$, the
schemes $X$ and $E$, the sheaves $\strSh_X(m)$ and the invariants 
$\bar e_i(-)$ do not depend directly on $I$ but only on $\overline{I}$;
hence we may replace $I$ by its minimal reduction $(x,y,z)$.

\begin{notationbox}
Let $(R, \frakm, \Bbbk)$ is a Noetherian
three-dimensional Cohen-Macaulay analytically unramified local ring and $I
= (x,y,z)$ an $\frakm$-primary $R$-ideal. 
Let $\scrR = \oplus_{n \in \naturals} \overline{I^n}t^n$,
$A = \oplus_{n \in \ints} \overline{I^n}t^n$ and $G = A/(t^{-1})$.
Write 
$\frakn$ for the $A$-ideal $ItA + (t^{-1})A$ and $G_+$ for the $G$-ideal
generated by the homogeneous elements of $G$ of positive degree. 
Write $X = \Proj \scrR$ and $E = \Proj G$; it is an effective
Cartier divisor of $X$, defined by the invertible sheaf of ideals
$\strSh_X(1) := I\strSh_X$. 
For any coherent sheaf $\calF$ on $X$ and $m
\in \ints$, we will write $\calF(m) = \calF \otimes_{\strSh_X}
(I\strSh_X)^m$.
For elements $a_1, \ldots, a_r$ of $A$, and an $A$-module $M$, write 
$K_\bullet(a_1, \ldots, a_r; M)$ for the Koszul complex
$K_\bullet(a_1, \ldots, a_r) \otimes_A M$ and 
$\homology_i(a_1, \ldots, a_r; M)$ for $\homology_i(K_\bullet(a_1, \ldots,
a_r; M))$. If $a_1, \ldots, a_r$ are homogeneous and $M$ is graded, then 
$K_\bullet(a_1, \ldots, a_r; M)$ can be considered as a complex of graded
$A$-modules with maps of degree zero.
\end{notationbox}

For every $m \in \ints$, there is an exact sequence
\begin{equation}
\label{equation:definingOE}
0 \to \strSh_X(m+1) \to \strSh_X(m) \to \strSh_E(m)  \to 0.
\end{equation}

Write $X' := \Proj \left(\oplus_{n \in \naturals} I^nt^n\right)$. Then
$I\strSh_{X'}$ is an ample invertible
sheaf~\cite[II,~(4.6.6),~(8.1.7)]{EGA}. 
Since $R$ is analytically unramified, the natural map $X \to X'$ is finite;
$f$ is the composite $X \to X' \to \Spec R$. Since $\strSh_X(1)$ is 
the pull-back of $I\strSh_{X'}$,
it is an ample invertible sheaf on $X$.

Write 
$C^\bullet = 0 \to C^0 \to C^1 \to C^2 \to C^3 \to C^4 \to 0$ 
for the stable Koszul complex (sometimes also
called (extended) {\v{C}ech} complex) on $A$ with
respect to the sequence $t^{-1}, xt,yt,zt$. The modules $C^j$ are graded,
with maps of degree zero. 
Let $L_\bullet = 0 \to L_n \to \cdots \to L_0 \to 0$ be a
complex of graded $A$-modules with maps of degree zero. Then we get a
second-quadrant double-complex with $L_i \otimes_A C^j$ at position
$(-i,j)$. Note that all the maps in this complex are of degree zero. There
are two associated spectral sequences $'\!E_*^{*,*}$ and 
$''\!E_*^{*,*}$ with 
\begin{align*}
'\!E_1^{-i,j} = \homology^j_{\frakn}(L_i) \;\text{with maps}\;&
'\!d_1^{-i,j} : {'\!E_1^{-i,j}} \to {'\!E_1^{-i+1,j}}
\qquad \text{and}\\
''\!E_2^{-i,j} = \homology^j_{\frakn}(\homology_i(L_\bullet))
\;\text{with maps}\;&
''\!d_2^{-i,j} : {''\!E_2^{-i,j}} \to {''\!E_2^{-i-1,j+2}}.
\end{align*}
Since all the maps in the double-complex have degree zero, the maps in the
two spectral sequences above have degree zero. Now, suppose that 
$L_\bullet = K_\bullet(a_1, \ldots, a_r;M)$ for some homogeneous elements
$a_1, \ldots, a_r$ and a graded $A$-module $M$.
Then the $'\!E_1$ page
consists of the Koszul complexes 
$K_\bullet(a_1, \ldots, a_r; \homology^j_\frakn(M))$.
Hence $'\!E_2^{-i,j} = \homology_i(a_1, \ldots, a_r; \homology^j_\frakn(M))$.

\begin{discussionbox}
\label{discussionbox:itohfacts}
We quote some facts, chiefly from Itoh's papers. While some of these
statements hold \textit{mutatis mutandis} in more generality, we keep to
dimension three.
There is an exact sequence (see~\cite[Appendix~2]{ItohIntClRegSeq88})
\[
\cdots \to \homology^i_{\frakn}(A) \to \homology^i_{It}(A) \to
\homology^i_\frakm(R)[t,t^{-1}] \to \cdots,
\]
from which we see that the natural
map $\homology^i_{\frakn}(A) \to \homology^i_{It}(A)$ is an isomorphism
of graded $A$-modules, for $i=0,1,2$. There is an exact sequence of graded
$A$-modules:
\begin{equation}
\label{equation:itohTopExactSeq}
0 \to \homology^3_{\frakn}(A) \to \homology^3_{It}(A) \to
\homology^3_\frakm(R)[t,t^{-1}] \to 
\homology^4_{\frakn}(A) \to 0.
\end{equation}
For $i \geq 1$, there is an isomorphism $\homology^{i+1}_{It}(A) \to
\bigoplus_{n \in \ints} \homology^i(X, \strSh_X(n))$. We have that 
(see~\cite[Lemma~4(i), p.~391]{ItohIntClRegSeq88})
$t^{-1}, zt$ is a regular sequence on $A$, so
\begin{align}
\label{equation:lowestTwolocalCohVanish}
\homology^i_\frakn(A) &= 0 \;\text{for}\; i \leq 1.
\intertext{Additionally, (see~\cite[Theorem~2, p.~390]{ItohIntClRegSeq88}
and~\cite[Theorem~1.2]{HongUlriPSID06})}
\label{equation:itohHongUlrichHTwo}
\homology^2_\frakn(A)_n &= 0 \;\text{for every}\; n \leq 0.
\end{align}
Since $\homology^1_\frakm(R) = \homology^2_\frakm(R) =0$, we see that 
\begin{equation}
\label{equation:negativeHOneOX}
\homology^1(X,\strSh_X(n)) = 
\homology^2_{It}(A)_n = 0 \;\text{for every}\; n \leq 0.
\end{equation}

Further $\bar{e}_3(I) = \length_R(\homology^3_{It}(A)_0) =  
\length_R(\homology^2(X, \strSh_X))$~\cite[p.~114]{ItohNormalHilbCoeffs92}.
Finally we remark that when 
$\bar{e}_3(I) = \length_R(\homology^2(X, \strSh_X)) = 0$,
$A$ is Cohen-Macaulay if and only if 
$\overline{I^{n+2}} = I^n \overline{I^2}$ for all $n \geq 0$;
see~\cite[Corollary~16]{ItohNormalHilbCoeffs92}. (Itoh uses this condition
to state his conjecture and many results in his paper.) 
We note that 
$\overline{I^{n+2}} = I^n \overline{I^2}$ for all $n \geq 0$ if and only if
$\overline{I^{n+1}} = I \overline{I^n}$ for all $n \geq 2$. This can be
proved by induction on $n$.

We remark that~\eqref{equation:itohHongUlrichHTwo} holds
in dimension two also; we will need this 
in the proof of Lemma~\ref{lemma:generalFacts}%
\eqref{enum:generalHonequotient}.
\end{discussionbox}

\section{Coherent sheaves on the projective plane}
\label{sec:CohOnPTwo}

In this section let $\Bbbk$ be any field. Let $\calF$ be a 
coherent sheaf on $\projective^2 := \projective^2_\Bbbk$. We use
Boij-S\"oderberg theory for coherent sheaves on projective
spaces~\cite{EiScSupNat09}
to make some observations about
$\dim_\Bbbk \homology^1(\projective^2, \calF(m)), m > 0$,
for certain sheafs $\calF$ (see~\eqref{equation:cohTableOfFNEW}). We begin
with a review of Boij-S\"oderberg theory, keeping ourselves to
$\projective^2$. 

Let $\calF$ be a coherent sheaf on $\projective^2$. The \define{cohomology
table} of $\calF$ is the element 
$\gamma(\calF)$ of $\prod_{m = -\infty}^\infty
\reals^3$ with $(\gamma(\calF))_{i,m} = 
\dim_\Bbbk \homology^i(\calF(m))$. 
Say that $\calF$ has 
\define{super-natural cohomology} if 
\begin{enumerate}
\item for every $m \in \ints$, there exists at most one $i$ such that
$\homology^i(\calF(m)) \neq 0$; and,
\item the Hilbert polynomial of $\calF$ i.e., the polynomial that gives
the function 
\[
m \mapsto \sum_{i=0}^2 (-1)^i \dim_\Bbbk \homology^i(\calF(m))
\]
 has distinct integral zeros.
\end{enumerate}
If $\calF$ has super-natural cohomology, then we denote the zeros of its
Hilbert polynomial by $z_1 > \cdots > z_s$ (with $s=1$ or $s=2$), and
call the sequence $(z_1, \ldots, z_s)$ the
\define{zero sequence} of $\calF$. Given any sequence 
$z = (z_1 > \cdots > z_s)$ of integers, (with $s=1$ or $s=2$), 
set $z_{s+1} = \cdots = z_3 = -\infty$ and $z_0 = \infty$.
Then there exists a sheaf $\calF$ with super-natural cohomology satisfying
the following:
\[
\dim_\Bbbk \homology^i(\calF(m)) = 
\begin{cases}
\prod_{i=1}^s|(m-z_i)|, & z_{i+1} < m < z_i; \\
0, & \text{otherwise.}
\end{cases}
\]
(To get this, apply~\cite[Theorem~6.1]{EiScConjOfBS07} with
$m_1=\cdots=m_k=1$.) 
We write $\gamma^z = \gamma(\calF)$ for this $\calF$.
Moreover, such a sheaf $\calF$ may be taken to be locally free on a linear
subvariety $\projective^s_\Bbbk \subseteq \projective^2$. 

Write $S=\Bbbk[u,v,w]$ for the homogeneous coordinate ring of
$\projective^2_\Bbbk$ and ${S_+}=(u,v,w)S$. 
We are interested in a coherent sheaf $\calF$ on $\projective^2$ such that
\begin{equation}
\label{equation:cohTableOfFNEW}
\homology^j(\projective^2,  \calF(m)) = 0 \;\text{if}\;
\begin{cases}
j=0 \;\text{and}\; m \leq -1,\\
\text{or}, 
j=1 \;\text{and}\; m = 0,\\
\text{or}, 
j=2 \;\text{and}\; m \geq 0.\\
\end{cases}
\end{equation}

Take the map 
$\strSh_{\projective^2}(-1)^{\oplus 3} \to \strSh_{\projective^2}$
coming from the natural Koszul complex on $\projective^2$ given by $u, v,
w$, and apply $-\otimes \calF(m)$.
Write $\psi_m$ for the ensuing map 
$\homology^1(\calF(m-1)^{\oplus 3}) \to \homology^1(\calF(m))$.

\begin{proposition}
\label{proposition:BSthyCorNEW}
Let $\calF$ satisfy~\eqref{equation:cohTableOfFNEW}. 
Then:
\begin{enumerate}

\item \label{proposition:BSthyCorNEWSurj}
For every $m \geq 2$, $\psi_m$ is surjective.

\item \label{proposition:BSthyCorNEWPersistence}
For all $m \geq 1$, if $\homology^1(\calF(m)) = 0$ then 
$\homology^1(\calF(m+1)) = 0$.

\item \label{proposition:BSthyCorNEWNonNegRationals}
There exist a positive integer $r$ and 
non-negative rational numbers $a_1, \ldots, a_r$
such that
$\dim_\Bbbk \homology^1(\calF(1)) = \sum_{i=1}^ria_i$ 
and  
$\dim_\Bbbk \ker \psi_2 = \sum_{i=1}^r(i+2)a_i$.
Hence $\psi_2$ is injective if and only if 
$\homology^1(\calF(1)) = 0$.

\item \label{proposition:BSthyCorNEWVanishAtDegThree}
Suppose that 
$\dim_\Bbbk \homology^1(\calF(1)) = 
\dim_\Bbbk \homology^1(\calF(2)) = 1$. Then 
$\dim_\Bbbk \homology^1(\calF(m)) = 0$ for every $m \geq 3$. 

\item \label{proposition:BSthyCorNEWLengthAtLeastThree}
Suppose that $\psi_2$ is not injective. Then 
$\dim_\Bbbk \ker \psi_2 \geq 3$. 
\end{enumerate}
\end{proposition}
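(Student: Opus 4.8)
The plan is to read part~\eqref{proposition:BSthyCorNEWLengthAtLeastThree} off from part~\eqref{proposition:BSthyCorNEWNonNegRationals}; no new geometric input is needed. First I would invoke \eqref{proposition:BSthyCorNEWNonNegRationals} to fix a positive integer $r$ and non-negative rational numbers $a_1, \ldots, a_r$ with
\[
\dim_\Bbbk \homology^1(\calF(1)) = \sum_{i=1}^{r} a_i, \qquad
\dim_\Bbbk \ker \psi_2 = \sum_{i=1}^{r} (i+2)\,a_i .
\]
Since by hypothesis $\psi_2$ is not injective, that same part (equivalently, $\ker \psi_2 \neq 0$ together with the displayed identities) gives $\homology^1(\calF(1)) \neq 0$. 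As this is the dimension of a nonzero $\Bbbk$-vector space, the quantity $\sum_{i=1}^{r} a_i = \dim_\Bbbk \homology^1(\calF(1))$ is a \emph{positive integer}, hence at least $1$.

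The second step is the elementary estimate that in each summand $i+2 \geq 3$, so
\[
\dim_\Bbbk \ker \psi_2 \;=\; \sum_{i=1}^{r} (i+2)\,a_i \;\geq\; 3 \sum_{i=1}^{r} a_i \;=\; 3\,\dim_\Bbbk \homology^1(\calF(1)) \;\geq\; 3 ,
\]
which is the assertion. The one point worth flagging is that the $a_i$ need not be integers, yet the two quantities they control are; it is precisely this integrality of $\dim_\Bbbk \homology^1(\calF(1))$ that upgrades ``$\homology^1(\calF(1)) \neq 0$'' to ``$\geq 1$'' and so yields the clean constant $3$ rather than merely ``positive''.

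Consequently there is no real obstacle in~\eqref{proposition:BSthyCorNEWLengthAtLeastThree} itself; all of the content lives in~\eqref{proposition:BSthyCorNEWNonNegRationals}. I would expect the genuinely hard part of the section to be that statement: decomposing a positive multiple of the cohomology table of $\calF$ as a non-negative rational combination $\sum_i a_i\,\gamma^{z^{(i)}}$ of supernatural tables compatible with the vanishing pattern~\eqref{equation:cohTableOfFNEW}, and checking summand by summand that each supernatural piece contributing one unit to $\homology^1$ at twist $1$ contributes exactly the weight $i+2 \geq 3$ to $\ker \psi_2$ --- that is, verifying the very inequality that makes~\eqref{proposition:BSthyCorNEWLengthAtLeastThree} immediate.
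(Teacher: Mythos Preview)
Your argument rests on a misreading of part~\eqref{proposition:BSthyCorNEWNonNegRationals}. The formula there is
\[
\dim_\Bbbk \homology^1(\calF(1)) = \sum_{i=1}^{r} i\,a_i,
\]
not $\sum_{i=1}^{r} a_i$. With the correct expression, your chain of inequalities breaks at the step ``$3\sum a_i = 3\dim_\Bbbk \homology^1(\calF(1))$''. All that the identities give directly is
\[
\dim_\Bbbk \ker\psi_2 - \dim_\Bbbk \homology^1(\calF(1)) = \sum_{i=1}^r(i+2)a_i - \sum_{i=1}^r i\,a_i = 2\sum_{i=1}^r a_i,
\]
which is a positive integer (being the difference of two integers), hence $\geq 1$; so one only gets $\dim_\Bbbk \ker\psi_2 \geq 2$. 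The putative ``supernatural'' configuration $a_2=\tfrac12$, $a_i=0$ otherwise, has $\sum i a_i = 1$, $\sum(i+2)a_i = 2$, and $\dim_\Bbbk\homology^1(\calF(2))=1$, so nothing in~\eqref{proposition:BSthyCorNEWNonNegRationals} alone excludes $\dim_\Bbbk\ker\psi_2 = 2$.

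The paper's proof of~\eqref{proposition:BSthyCorNEWLengthAtLeastThree} is accordingly much longer than you expect: after reaching $\dim_\Bbbk\ker\psi_2\geq 2$ as above, it assumes $\dim_\Bbbk\ker\psi_2=2$, deduces $\dim_\Bbbk\homology^1(\calF(1))=\dim_\Bbbk\homology^1(\calF(2))=1$, invokes part~\eqref{proposition:BSthyCorNEWVanishAtDegThree} to force $\homology^1(\calF(m))=0$ for $m\geq 3$, and then runs a module-theoretic argument on $M=\bigoplus_m\homology^0(\calF(m))$ over $S=\Bbbk[u,v,w]$ (computing minimal generator counts in degrees $3$ and $4$ via the hypercohomology spectral sequence, dualizing to $\Ext^1_S(M,S(-3))$, and deriving a contradiction from the possible structures of its low-degree piece). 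The content of~\eqref{proposition:BSthyCorNEWLengthAtLeastThree} is \emph{not} a formal consequence of~\eqref{proposition:BSthyCorNEWNonNegRationals}.
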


\begin{proof}
\ref{proposition:BSthyCorNEW}\eqref{proposition:BSthyCorNEWSurj}:
Let $m \geq 2$. Let
\[
0 \to \strSh_{\projective^2}(-3) \to 
\strSh_{\projective^2}(-2)^{\oplus 3} \to 
\strSh_{\projective^2}(-1)^{\oplus 3} \to 
\strSh_{\projective^2} \to 0
\]
be the Koszul complex on $\projective^2$, which is a locally free
resolution of $0$. Apply 
$- \otimes \calF(m)$ to this complex 
and consider the hypercohomology
spectral sequence for the functor $\homology^0(X, -)$.
The ${'\!E_1}$-page of this spectral sequence is
\[
\xymatrix@R=0.5em{
\homology^2(\calF(m-3)) \ar[r] & 0 \ar[r] & 0 \ar[r] & 0 \\
\homology^1(\calF(m-3)) \ar[r] & 
\homology^1(\calF(m-2))^{\oplus 3} \ar[r] & 
\homology^1(\calF(m-1))^{\oplus 3} \ar[r]^-{\psi_m} & 
\homology^1(\calF(m)) 
\\
\homology^0(\calF(m-3)) \ar[r] & 
\homology^0(\calF(m-2))^{\oplus 3} \ar[r] & 
\homology^0(\calF(m-1))^{\oplus 3} \ar[r] & \homology^0(\calF(m)) 
}
\]
(We have used the fact that $m \geq 2$ to get the three zeros in the top
row.) This spectral sequence abuts to zero, whence follows the surjectivity
of $\psi_m$.

\ref{proposition:BSthyCorNEW}\eqref{proposition:BSthyCorNEWPersistence}: Immediate
from~\ref{proposition:BSthyCorNEW}\eqref{proposition:BSthyCorNEWSurj}.

\ref{proposition:BSthyCorNEW}\eqref{proposition:BSthyCorNEWNonNegRationals}:
Let $\calZ$ be a chain of zero sequences and $a_z, z \in \calZ$
be non-negative real numbers such that 
\[
\gamma(\calF) = \sum_{z \in \calZ} a_z \gamma^z;
\]
see~\cite[Theorem~0.2]{EiScSupNat09} and the discussion leading to it.
Since we are over $\projective^2$, $z$ could be of the form $z = (z_1)$ or
of the form $z = (z_1 > z_2)$. In the former case, 
$\gamma^z$ is the cohomology table of a locally free 
sheaf on a projective line $\projective^1$ embedded inside $\projective^2$ 
as a linear subspace; in the latter case, it is the cohomology table
of a locally free sheaf on $\projective^2$. First suppose that $z = (z_1)$
and that $a_z > 0$. Then $\homology^1(\calF(m)) \neq 0$ for all $m < z_1$
and $\homology^0(\calF(m)) \neq 0$ for all $m > z_1$. It follows at once
from~\eqref{equation:cohTableOfFNEW} that $z_1 \in \{0,-1\}$. Such
cohomology tables do not contribute to $\homology^1(\calF(m))$ for any $m >
0$.
Now suppose that
$z = (z_1 > z_2)$ and that $a_z > 0$. Then for every $m$ such that 
$z_2 < m < z_1$, $\homology^1(\calF(m)) \neq 0$. It follows again 
from~\eqref{equation:cohTableOfFNEW} that $z_2 \leq 0$, that $z_1 \geq -1$ and
that if $\gamma^z$ contributes to $\homology^1(\calF(m))$ for any $m > 0$,
then $z_2 = 0$. Hence we need only consider the finite subset of $\calZ$
consisting of all the zero sequences of the form $(0 < z_1)$. Label these
zero sequences $z^{(i)} = (0 < i+1), i = 1, \ldots, r$. By abusing
notation, write $a_i$ for the non-negative real coefficient with which
$\gamma^{z^{(i)}}$ appears in $\gamma(\calF)$. These coefficients are in
fact rational~\cite[Theorem~0.2]{EiScSupNat09}. 
We may assume that $a_r > 0$.

Therefore
\begin{equation}
\label{equation:HOneFmNEW}
\dim_\Bbbk \homology^1(\calF(m)) = 
\begin{cases}
\sum_{i=m}^r m(i+1-m)a_i, & 1 \leq m \leq r,\\
0, & m > r.
\end{cases}
\end{equation}
In particular,
\begin{align*}
\dim_\Bbbk \homology^1(\calF(1)) & = \sum_{i=1}^ria_i \\
\dim_\Bbbk \homology^1(\calF(2)) & = \sum_{i=2}^r2(i-1)a_i.
\end{align*}
Hence 
\[
\dim_\Bbbk \ker \psi_2 = 
3 \sum_{i=1}^ria_i - \sum_{i=2}^r2(i-1)a_i
= \sum_{i=1}^r(i+2)a_i.
\]

\ref{proposition:BSthyCorNEW}\eqref{proposition:BSthyCorNEWVanishAtDegThree}:
First we show that $\dim_{\Bbbk}\homology^1(\calF(m)) \leq 1$ for every $m
\geq 3$.
Since $\psi_2$ is surjective, we see
that, without loss of generality, the multiplication map 
\[
\homology^1(\calF(1)) \stackrel{u}{\to} \homology^1(\calF(2))
\]
is non-zero; since $\dim_\Bbbk \homology^1(\calF(1))=
\dim_\Bbbk \homology^1(\calF(2))=1$, this map is an isomorphism. Write $H$
for the hyperplane (isomorphic to $\projective^1$) defined by the equation
$u=0$ inside $\projective^2$.
Use~\eqref{equation:cohTableOfFNEW} to see that 
$\homology^1(H, \calF|_H(2)) = 0$.  Hence 
$\homology^1(H, \calF|_H(m)) = 0$ for all $m \geq 3$, from which we see
that the map
\[
\homology^1(\calF(m-1)) \stackrel{u}{\to} \homology^1(\calF(m))
\]
is surjective for every $m \geq 3$. Therefore 
$\dim_\Bbbk \homology^1(\calF(m)) \leq 1$ for every $m \geq 3$. 

Now let $m_0 := \max\{m : \homology^1(\calF(m)) \neq 0\}$.
Then~\ref{proposition:BSthyCorNEW}\eqref{proposition:BSthyCorNEWPersistence}
implies that 
\[
\{m: m \geq 1, \homology^1(\calF(m)) \neq 0 \} = \{1, \ldots, m_0\}.
\]
We want to show that $m_0 \leq 2$. Let $r, z^{(i)}, a_i$ be as in the
proof
of~\ref{proposition:BSthyCorNEW}\eqref{proposition:BSthyCorNEWNonNegRationals}.
Then~\eqref{equation:HOneFmNEW}
implies that $m_0 = r$; hence we want to show that $r \leq 2$.
Note that
\begin{align*}
\dim_\Bbbk \homology^1(\calF(r)) & = ra_r =1\\
\dim_\Bbbk \homology^1(\calF(r-1)) & = (r-1)a_{r-1}+ 2(r-1)a_r = 1.
\end{align*}
Since $a_{r-1} \geq 0$ and $a_r > 0$, we see that 
$r \geq 2(r-1)$, or equivalently, that $r \leq 2$.

\ref{proposition:BSthyCorNEW}\eqref{proposition:BSthyCorNEWLengthAtLeastThree}: 
From~\ref{proposition:BSthyCorNEW}\eqref{proposition:BSthyCorNEWNonNegRationals},
we see that there exists
$i$ such that $a_i >0$, so 
$\dim_\Bbbk \ker \psi_2 > \dim_\Bbbk \homology^1(\calF(1)) > 0$;
since $\dim_\Bbbk \ker \psi_2$ and
$\dim_\Bbbk \homology^1(\calF(1))$ are integers,
$\dim_\Bbbk \ker \psi_2 \geq 2$. We will show that 
$\dim_\Bbbk \ker \psi_2 \neq 2$.

Suppose that $\dim_\Bbbk \ker \psi_2 = 2$.
Then 
by~\ref{proposition:BSthyCorNEW}\eqref{proposition:BSthyCorNEWNonNegRationals}, 
$\dim_\Bbbk \homology^1(\calF(1))=1$ and, hence,
$\dim_\Bbbk \homology^1(\calF(2))=1$. 
Thus
using~\ref{proposition:BSthyCorNEW}\eqref{proposition:BSthyCorNEWVanishAtDegThree}
we can refine the information from~\eqref{equation:cohTableOfFNEW} to the
following:
\begin{equation}
\label{equation:cohTableOfFNewNEW}
\homology^j(\projective^2,  \calF(m)) = 
\begin{cases}
0, & \;\text{if}\;
\begin{cases}
j=0 \;\text{and}\; m \leq -1,\\
\text{or}, 
j=1 \;\text{and}\; (m = 0 \;\text{or}\; m \geq 3),\\
\text{or}, 
j=2 \;\text{and}\; m \geq 0.\\
\end{cases}
\\
1, & \;\text{if}\; j = 1 \;\text{and}\; m \in \{1,2\}.\\
\end{cases}
\end{equation}
It suffices to show that there does not exist a coherent sheaf $\calF$
satisfying~\eqref{equation:cohTableOfFNewNEW}.

Assume to the contrary and 
let $M=\bigoplus_{m \in \ints}\homology^0(\projective^2, \calF(m))$. 
Recall that $S=\Bbbk[u,v,w]$ and ${S_+}=(u,v,w)S$. 
Consider the hypercohomology spectral sequence
from the proof of~\ref{proposition:BSthyCorNEW}\eqref{proposition:BSthyCorNEWSurj}, for $m=3$ and $m=4$.
For $m=3$, using~\eqref{equation:cohTableOfFNewNEW}, we get the
${'\!E_1}$-page to be
\[
\xymatrix@R=0.5em{
0 \ar[r] & 0 \ar[r] & 0 \ar[r] & 0 \\
0 \ar[r] & \Bbbk^{\oplus 3} \ar[r] & 
\Bbbk^{\oplus 3} \ar[r]^-{\psi_3} & 0 \\
\homology^0(\calF) \ar[r] & 
\homology^0(\calF(1))^{\oplus 3} \ar[r] & 
\homology^0(\calF(2))^{\oplus 3} \ar[r] & \homology^0(\calF(3)) .
}
\]
This abuts to zero, so the map $\Bbbk^{\oplus 3}  \to
\Bbbk^{\oplus 3}$ is surjective, hence also injective. Therefore the map 
$\homology^0(\calF(2))^{\oplus 3} \to \homology^0(\calF(3))$ is surjective,
i.e., 
\begin{equation}
\label{equation:NoMinGensInDegThreeNEW}
\dim_{\Bbbk}\left(\frac{M}{{S_+} M}\right)_3 = 0.
\end{equation}
For $m=4$, the ${'\!E_1}$-page is
\[
\xymatrix@R=0.5em{
0 \ar[r] & 0 \ar[r] & 0 \ar[r] & 0 \\
\Bbbk \ar[r] & \Bbbk^{\oplus 3} \ar[r] & 0 \ar[r]& 0\\
\homology^0(\calF(1)) \ar[r] & 
\homology^0(\calF(2))^{\oplus 3} \ar[r] & 
\homology^0(\calF(3))^{\oplus 3} \ar[r] & \homology^0(\calF(4)) 
}
\]
Since $\psi_2$ is surjective, at least one of the maps given by
multiplication by $u$, $v$, $w$ is non-zero, so the map 
$\Bbbk \to \Bbbk^{\oplus 3}$ of the second row in the above spectral
sequence is injective. Hence
\begin{equation}
\label{equation:TwoMinGensInDegFourNEW}
\dim_{\Bbbk}\left(\frac{M}{{S_+} M}\right)_4 = 2.
\end{equation}

Note that $\depth_{S_+}(M) = 2$~\cite[Theorem~13.21] {Loco24Hrs}.
Let 
\[
0 \to F_1 \stackrel{\partial}{\to} F_0 \to 0
\]
be a minimal graded $S$-free resolution of $M$. From this, 
$\Ext^1_S(M,S(-3))$ has a graded $S$-free presentation
\begin{equation}
\label{equation:MinPresOfHomNEW}
\Hom_S(F_0,S(-3)) \stackrel{\partial^*}{\to} \Hom_S(F_1,S(-3))
\; \text{with}\; \image(\partial^*) \subseteq 
S_+\cdot\left(\Hom_S(F_1,S(-3))\right).
\end{equation}
From this we get a graded isomorphism (of degree $0$)
\[
\Hom_S(F_1,S(-3)) \otimes_S \Bbbk = \Ext^1_S(M,S(-3))  \otimes_S \Bbbk.
\]

The Hilbert series of 
$\homology^2_{S_+}(M)$ is $\hssymb^1 + \hssymb^2 + \wp(\hssymb^{-1})$ where
$\wp(-)$ is a formal power series with a zero constant term
(see~\eqref{equation:cohTableOfFNewNEW}). Hence, by
(graded) local duality~\cite[Theorem~13.5]{Loco24Hrs} the Hilbert series of 
$\Ext^1_S(M,S(-3))$ is $\hssymb^{-2}+\hssymb^{-1} + \wp(\hssymb)$.
Let
\[
N := \Ext^1_S(M,S(-3))_{-2} \oplus \Ext^1_S(M,S(-3))_{-1}.
\]
Since $\Ext^1_S(M,S(-3))_{0} = 0$, $N$ is 
a summand of $\Ext^1_S(M,S(-3))$ as an $S$-module. 
Let
\[
G_\bullet : \quad 0 \to G_3 \to G_2 \to G_1 \to G_0 \to 0
\]
be a minimal graded $S$-free resolution of $N$.
We may extend~\eqref{equation:MinPresOfHomNEW}
to a (possibly non-minimal) graded $S$-free resolution of 
$\Ext^1_S(M,S(-3))$. Note that
$G_\bullet$ is a summand of this resolution of $\Ext^1_S(M,S(-3))$. In particular
$G_1$ is a summand of $\Hom_S(F_0,S(-3))$.

If $N = N_{-2} \oplus N_{-1}$ as $S$-modules, then 
\[
G_\bullet = K_\bullet(u,v,w; S(2) \oplus S(1))
\]
so $S(1)^{\oplus 3}$ is a summand of $G_1$ and hence of 
$\Hom_S(F_0,S(-3))$.  Equivalently,
$S(-4)^{\oplus 3}$ is a direct summand of $F_0$
contradicting~\eqref{equation:TwoMinGensInDegFourNEW}. On the other hand,
if $N$ is generated by $N_{-2}$ as an $S$-module, then without loss of
generality, 
\[
N = \left(\frac{S}{(u,v,w^2)}\right)(2), 
\]
so 
\[
G_\bullet = K_\bullet(u,v,w^2; S(2)),
\]
from which it follows that $S$ is a summand of $\Hom_S(F_0,S(-3))$.
Equivalently, $S(-3)$ is a summand of $F_0$,
contradicting~\eqref{equation:NoMinGensInDegThreeNEW}.
\end{proof}

\section{Proofs}
\label{sec:proofs}

As we remarked in Section~\ref{sec:results}, we first prove the vanishing
$\homology^3_\frakn(A) = 0$, whence, using some homological algebra,
follow
Theorem~\ref{theorem:itoh}\eqref{enum:thmitohHthreelocalvanishes}--%
\eqref{enum:thmitohexactseq}.
The non-vanishing of $\homology^2(X, \strSh_X(1))$, taken with the
results obtained in the previous section, gives the proof of
Theorem~\ref{theorem:itoh}\eqref{enum:thmitohEquichar}.
The first assertion of Theorem~\ref{theorem:lipman} is
deduced from the vanishing of $\homology^3_\frakn(A)$.
The rest of the proofs are more or less standard computations using
spectral sequences.

In the next lemma, we do not need the assumption that $\bar e_3(I) = 0$.
Thereafter that $\bar e_3(I) = 0$ will be the standing hypothesis.

\begin{lemma}
\label{lemma:generalFacts}
With notation as above:
\begin{asparaenum}
\item \label{enum:generalHonequotient}
$\homology^1_{\frakn}\left(\frac{A}{(t^{-1}, zt)}\right)_n = 0$ for all
$n \leq -1$.
\item \label{enum:generalhThreeVanishNegDegs}
$\homology_2(t^{-1},zt; \homology^3_\frakn(A))_n = 0$ 
for all $n \leq -1$.
\end{asparaenum}
\end{lemma}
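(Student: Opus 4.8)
The plan is to prove assertion~\eqref{enum:generalHonequotient} by a reduction to dimension two, and then to deduce~\eqref{enum:generalhThreeVanishNegDegs} from it with the double-complex machinery of Section~\ref{sec:prelims}; in the degrees that matter the two modules in question will turn out to be isomorphic, so the second assertion is essentially a corollary of the first. For~\eqref{enum:generalHonequotient}: since $R$ is Cohen--Macaulay and $z$ is part of a minimal reduction of the $\frakm$-primary ideal $I$, it is a nonzerodivisor on $R$, so $\bar R := R/(z)$ is a two-dimensional Cohen--Macaulay local ring and $\bar I := I\bar R$ is $\frakm_{\bar R}$-primary with minimal reduction $(\bar x, \bar y)$. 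The goal is to identify $B := A/(t^{-1}, zt)$ with the associated graded ring $G_{\bar R} := \bigoplus_n \overline{\bar I^{n}}/\overline{\bar I^{n+1}}$ of the normal filtration of $\bar I$. By Discussion~\ref{discussionbox:itohfacts} the sequence $t^{-1}, zt$ is $A$-regular; equivalently $zt$ is $A$-regular and the initial form $z^\ast$ of $z$ is a nonzerodivisor on $G$, a Valabrega--Valla type condition that lets one realize $A/(zt)$ as the extended Rees algebra of the filtration induced on $\bar R$ and $B$ as its associated graded ring, the induced filtration being (one must check) the integral-closure filtration $\{\overline{\bar I^{n}}\}$. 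Granting this, set $A_{\bar R} := \bigoplus_{n\in\ints}\overline{\bar I^{n}}t^n$; then $\homology^{\le 1}_{\frakn_{\bar R}}(A_{\bar R}) = 0$ (as in~\eqref{equation:lowestTwolocalCohVanish}) and the dimension-two case of~\eqref{equation:itohHongUlrichHTwo} gives $\homology^2_{\frakn_{\bar R}}(A_{\bar R})_n = 0$ for all $n \le 0$. Feeding these into the long exact local cohomology sequence of $0 \to A_{\bar R}(1) \xrightarrow{t^{-1}} A_{\bar R} \to G_{\bar R} \to 0$ gives $\homology^1_{\frakn}(G_{\bar R})_n = 0$ for $n \le -1$, which is the claim.

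For~\eqref{enum:generalhThreeVanishNegDegs}, apply the double complex of Section~\ref{sec:prelims} with $L_\bullet = K_\bullet(t^{-1}, zt; A)$. Since $t^{-1}, zt$ is $A$-regular, $\homology_i(L_\bullet)$ equals $B$ for $i = 0$ and vanishes for $i > 0$, so the $''\!E$-spectral sequence collapses onto a single column and the total complex computes $\homology^\bullet_\frakn(B)$; meanwhile $'\!E_2^{-i,j} = \homology_i(t^{-1}, zt; \homology^j_\frakn(A))$. By~\eqref{equation:lowestTwolocalCohVanish} and Grothendieck vanishing ($\dim A = 4$) only the rows $j \in \{2,3,4\}$ contribute, so $\homology^1_\frakn(B)$ has just two possible associated graded pieces: $'\!E_\infty^{-1,2} = \homology_1(t^{-1},zt;\homology^2_\frakn(A))$ and $'\!E_\infty^{-2,3}$, a quotient of $\homology_2(t^{-1},zt;\homology^3_\frakn(A))$. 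Now~\eqref{equation:itohHongUlrichHTwo} gives $\homology^2_\frakn(A)_n = 0$ for $n \le 0$; a short degree count then shows that in degrees $n \le -1$ the first piece vanishes, and the $'d_2$-differential into $'\!E_2^{-2,3}$ (its source $'\!E_2^{0,2} = \homology_0(t^{-1},zt;\homology^2_\frakn(A))$ being a quotient of $\homology^2_\frakn(A)$ in non-positive degrees) is zero, so $\homology^1_\frakn(B)_n \cong \homology_2(t^{-1},zt;\homology^3_\frakn(A))_n$ for such $n$. Assertion~\eqref{enum:generalHonequotient} now forces $\homology_2(t^{-1},zt;\homology^3_\frakn(A))_n = 0$ for all $n \le -1$.

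The main obstacle is the dimension-two reduction inside~\eqref{enum:generalHonequotient}: verifying that $A/(t^{-1}, zt)$ really is the associated graded ring of the normalized extended Rees algebra of a two-dimensional ring to which~\eqref{equation:itohHongUlrichHTwo} applies. Concretely one has to check that the filtration induced on $R/(z)$ by $\{\overline{I^n}\}$ is the integral-closure filtration of $I(R/(z))$, and that $R/(z)$ (or its reduction, which is what governs integral closures of $\frakm_{\bar R}$-primary ideals) is analytically unramified so that the results of Itoh and of Hong--Ulrich behind~\eqref{equation:itohHongUlrichHTwo} are available. Everything after that is bookkeeping with long exact sequences and the two spectral sequences recalled in Section~\ref{sec:prelims}.
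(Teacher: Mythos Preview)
Your argument for~\eqref{enum:generalhThreeVanishNegDegs} is essentially the paper's: the same double complex, the same identification of the relevant $'\!E_\infty$-pieces with graded pieces of $\homology^1_\frakn(A/(t^{-1},zt))$, and the same use of~\eqref{equation:itohHongUlrichHTwo} to kill $'\!E_2^{0,2}$ in non-positive degrees. (A minor slip: you speak of the differential \emph{into} $'\!E_2^{-2,3}$ with source $'\!E_2^{0,2}$, but in the paper's conventions $'d_2^{-2,3}$ goes \emph{from} $'\!E_2^{-2,3}$ \emph{to} $'\!E_2^{0,2}$. What you actually need is that its target vanishes in degrees $\le 0$, so that $'\!E_\infty^{-2,3} = \ker {'d_2^{-2,3}} = {'\!E_2^{-2,3}}$ there; the conclusion is unaffected.)

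The genuine gap is in~\eqref{enum:generalHonequotient}. You want to identify $A/(t^{-1},zt)$ with the associated graded ring $G_{\bar R}$ of the \emph{normal} filtration on $\bar R = R/(z)$; equivalently, you want the induced filtration $\{\overline{I^n}\bar R\}$ to coincide with the integral-closure filtration $\{\overline{(I\bar R)^n}\}$. You are right that regularity of $t^{-1},zt$ on $A$ gives $(A/(zt))_n = \overline{I^n}\bar R$, but the second identification fails in general, and the paper does not claim it. Itoh's specialization theorem~\cite[Theorem~1]{ItohNormalHilbCoeffs92}, after a rational extension and a generic choice of $z$, yields only $\overline{I^n}\bar R = \overline{(I\bar R)^n}$ for $n \le 1$ and for $n \gg 0$; for intermediate $n$ the two filtrations may differ. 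The paper therefore studies the comparison map $\varphi: A/(zt) \to B := \bigoplus_{n\in\ints}\overline{(I\bar R)^n}t^n$, shows it is injective (since $\depth_\frakn A/(zt) \ge 1$) with finite-length cokernel $M$ concentrated in degrees $\ge 2$, and then compares $A/(t^{-1},zt)$ with $B/(t^{-1})$ through an exact sequence whose outer terms are $\Tor^A_i(M,A/(t^{-1}))$, hence vanish in degrees $\le 1$. Only after this comparison does the two-dimensional instance of~\eqref{equation:itohHongUlrichHTwo} (applied to $B$) finish the proof. The ``one must check'' you flag at the end is precisely the step that cannot be checked---it is false as stated---and the correct argument routes around it rather than through it.
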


\begin{proof}
\ref{lemma:generalFacts}\eqref{enum:generalHonequotient}:
By replacing $R$ by a rational extension and $x, y, z$ by suitable
elements, we may assume that, with $R' = R/(z)$ and $J = IR'$,
$\overline{J^n} = \overline{I^n}R' = \overline{I^n}/z\overline{I^{n-1}}$ 
for $n=1$ and for $n \gg 0$;
see~\cite[Theorem~1]{ItohNormalHilbCoeffs92}. 
Let $B = \oplus_{n \in \ints}\overline{J^n}t^n$. Consider the natural map
$\varphi : A/(zt) \to B$. By above, this map is bijective in degree $n$ for
$n \leq 1$ and for $n \gg 0$. Hence $\ker \varphi$ and $\coker \varphi$ are 
finite-length $A$-modules. (See also the first paragraph of the
proof~\cite[Theorem~2.1]{HongUlriPSID06}.) On the other hand, 
by~\eqref{equation:lowestTwolocalCohVanish}, $\depth_\frakn(A/(zt)) \geq 1$, so $\varphi$ is
injective.

Write $M = \coker \varphi$. 
Set $N_i = \Tor_i^A(M, A/(t^{-1})), i \geq 0$. 
Since $t^{-1}$ is a non-zero-divisor on $A$ and $B$, we have an exact
sequence
\[
0 \to N_1 \to A/(t^{-1}, zt) \to B/(t^{-1}) \to N_0 \to 0.
\]
Upon breaking this into two short exact sequences and applying 
$\homology^0_{\frakn}(-)$, we obtain the exact sequence
\[
0 \to N_0 \to
\homology^1_{\frakn}\left(\frac{A}{\left(t^{-1},zt\right)}\right) \to
\homology^1_{\frakn}\left(\frac{B}{\left(t^{-1}\right)}\right) \to 0.
\]
Since $M_n = 0$ for all $n \leq 1$, $(N_0)_n = 0$ for all $n \leq 1$.
Hence to prove the proposition, it suffices to show that 
\[
\homology^1_{\frakn}\left(\frac{B}{\left(t^{-1}\right)}\right)_n = 0 
\]
for all $n \leq -1$. Note, however that 
\[
\homology^1_{\frakn}\left(\frac{B}{\left(t^{-1}\right)}\right)_n 
= \ker \left(
\homology^2_{\frakn}(B)_{n+1} \stackrel{t^{-1}}{\to} 
\homology^2_{\frakn}(B)_{n} \right).
\]
From~\eqref{equation:itohHongUlrichHTwo} (using the remark at the end of
Discussion~\ref{discussionbox:itohfacts}),
we know that 
$\homology^2_{\frakn}(B)_{n+1} = 0$ for all $n \leq -1$. 
This finishes the proof of~\ref{lemma:generalFacts}\eqref{enum:generalHonequotient}.

\ref{lemma:generalFacts}\eqref{enum:generalhThreeVanishNegDegs}:
Take $L_\bullet = K_\bullet(t^{-1}, zt; A)$. Then 
${'\!E_2^{-i,j}} = \homology_i(t^{-1}, zt; \homology^j_\frakn(A))$.
By~\eqref{equation:lowestTwolocalCohVanish},
${'\!E_2^{-i,j}} = 0$ unless $(i,j) \in \{0,1,2\} \times \{2,3,4\}$. All
the maps on the ${'\!E_2}$-page except possibly
${'\!d_2^{-2,3}}$ and ${'\!d_2^{-2,4}}$ are zero. 
Again %
by~\eqref{equation:lowestTwolocalCohVanish},
the only possibly non-zero terms on the ${''\!E_2}$-page are 
${''\!E_2^{0,j} = \homology^j_{\frakn}(A/((t^{-1},zt)A))}$ for
$j \in \{0,1,2\}$.
Comparing the two spectral sequences, we see that the associated graded
object of 
$\homology^1_{\frakn}(A/((t^{-1},zt)A))$ with respect to the 
${'\!E}$ spectral sequence is 
${'\!E_{2}^{-1,2}} \oplus \ker {'\!d_2^{-2,3}}$. 
Using Lemma~\ref{lemma:generalFacts}\eqref{enum:generalHonequotient}, we
conclude that $(\ker {'\!d_2^{-2,3}})_n = 0$ for all $n \leq -1$. 
On the other hand, 
for all $n \leq 0$, ${'\!d_2^{-2,3}}$ maps $({'\!E_{2}^{-2,3}})_n$
to $({'\!E_{2}^{0,2}})_n = 
\homology_0(t^{-1}, zt; \homology^2_\frakn(A))_n = 0$. Hence, for all
$n \leq -1$, 
$({'\!E_{2}^{-2,3}})_n = (\ker {'\!d_2^{-2,3}})_n = 0$, proving 
\ref{lemma:generalFacts}\eqref{enum:generalhThreeVanishNegDegs}.
\end{proof}

\begin{lemma}
\label{lemma:ItohHypImpliesHtwoVanishPositiveTwists}
If $\bar{e}_3(I) = 0$, then $\homology^3_{It}(A)_n = 0$ for all $n \geq 0$.
\end{lemma}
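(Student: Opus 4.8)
The plan is to reformulate the assertion on the scheme $X$. By the facts collected in Discussion~\ref{discussionbox:itohfacts}, in particular the graded isomorphism $\homology^{i+1}_{It}(A) \cong \bigoplus_{n \in \ints} \homology^i(X, \strSh_X(n))$ for $i \geq 1$, the claim that $\homology^3_{It}(A)_n = 0$ for all $n \geq 0$ is the same as the claim that $\homology^2(X, \strSh_X(n)) = 0$ for all $n \geq 0$, while the hypothesis $\bar e_3(I) = 0$ is exactly the instance $n = 0$ of the latter, namely $\homology^2(X, \strSh_X) = 0$. Since $\strSh_X(1)$ is an ample invertible sheaf (Section~\ref{sec:prelims}), Serre vanishing gives $\homology^2(X, \strSh_X(n)) = 0$ for all $n \gg 0$; so the work is to descend from large $n$ to $n = 0$.

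First I would descend using the exact sequence~\eqref{equation:definingOE}, that is, $0 \to \strSh_X(n+1) \to \strSh_X(n) \to \strSh_E(n) \to 0$, whose cohomology sequence contains $\homology^2(X, \strSh_X(n+1)) \to \homology^2(X, \strSh_X(n)) \to \homology^2(E, \strSh_E(n))$. Hence if $\homology^2(E, \strSh_E(n)) = \homology^3_{G_+}(G)_n = 0$ for every $n \geq 0$, then each transition map $\homology^2(X, \strSh_X(n+1)) \to \homology^2(X, \strSh_X(n))$ with $n \geq 0$ is surjective, and composing these maps exhibits $\homology^2(X, \strSh_X(m))$, for any fixed $m \geq 0$, as a quotient of $\homology^2(X, \strSh_X(N))$ for all $N \geq m$; taking $N$ past the Serre bound forces $\homology^2(X, \strSh_X(m)) = 0$. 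So the lemma reduces to showing that $\bar e_3(I) = 0$ forces $\homology^2(E, \strSh_E(n)) = 0$ for all $n \geq 0$.

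This last point is precisely one direction of the equivalence recorded in Section~\ref{sec:results}, that $\homology^2(X, \strSh_X) = 0$ if and only if $\homology^2(E, \strSh_E(n)) = 0$ for all $n \geq 0$, which I would invoke; failing that, I would reprove the needed direction directly. For that I would use that $t^{-1}$ is a nonzerodivisor on $A$, so that $0 \to A(1) \xrightarrow{t^{-1}} A \to G \to 0$ yields a long exact sequence comparing $\homology^\bullet_\frakn(A)$ with $\homology^\bullet_{G_+}(G) = \homology^\bullet_\frakn(G)$, into which one feeds $\homology^{\leq 1}_\frakn(A) = 0$ and $\homology^2_\frakn(A)_n = 0$ for $n \leq 0$ from~\eqref{equation:lowestTwolocalCohVanish}--\eqref{equation:itohHongUlrichHTwo}, the sequence~\eqref{equation:itohTopExactSeq}, and Lemma~\ref{lemma:generalFacts}. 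Note in passing that~\eqref{equation:itohTopExactSeq} already yields, under $\bar e_3(I) = 0$, that $\homology^3_\frakn(A)_0 = 0$, and that the kernels of the degree-$n$ connecting maps $\homology^3_\frakm(R) \to \homology^4_\frakn(A)_n$ shrink as $n$ increases, hence vanish for every $n \geq 0$; thus $\homology^3_{It}(A)_n \cong \homology^3_\frakn(A)_n$ for $n \geq 0$, and the lemma is equivalent to the non-negative-degree part of the vanishing $\homology^3_\frakn(A) = 0$. The main obstacle is exactly to secure this vanishing in degrees $\geq 0$ without circularity: the unconditional statement $\homology^3_\frakn(A) = 0$ of Theorem~\ref{theorem:itoh}\eqref{enum:thmitohHthreelocalvanishes} is only proved afterwards, so the cleanest route is to establish $\homology^2(E, \strSh_E(n)) = 0$ for $n \geq 0$ by the stated equivalence (or its direct proof sketched above) and then conclude by the elementary descent.
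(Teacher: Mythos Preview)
Your reduction is sound up to a point, but the argument has a genuine gap at the step you yourself flag as ``the main obstacle.'' You reduce the lemma to the assertion $\homology^2(E,\strSh_E(n))=0$ for every $n\geq 0$, and then propose to obtain this either by invoking the equivalence announced (without proof) in Section~\ref{sec:results}, or by a sketch using the long exact sequence attached to $0\to A(1)\xrightarrow{t^{-1}}A\to G\to 0$. Neither route closes. The direction of the equivalence you need is exactly the nontrivial one: from the surjection $\homology^2(X,\strSh_X(n))\twoheadrightarrow\homology^2(E,\strSh_E(n))$ one sees that the $E$-vanishing you want is an immediate \emph{consequence} of the $X$-vanishing you are trying to prove, so appealing to that equivalence is circular. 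And the long exact sequence in $\homology^\bullet_\frakn$ only relates $\homology^3_\frakn(G)_n$ to the maps $t^{-1}\colon\homology^3_\frakn(A)_{n+1}\to\homology^3_\frakn(A)_n$ and $t^{-1}\colon\homology^4_\frakn(A)_{n+1}\to\homology^4_\frakn(A)_n$; none of the inputs you list (\eqref{equation:lowestTwolocalCohVanish}, \eqref{equation:itohHongUlrichHTwo}, \eqref{equation:itohTopExactSeq}, Lemma~\ref{lemma:generalFacts}) controls these maps in degrees $n\geq 0$. Your ``in passing'' observation that the kernels of $\homology^3_\frakm(R)\to\homology^4_\frakn(A)_n$ decrease with $n$ is correct and shows $\homology^3_{It}(A)_n\cong\homology^3_\frakn(A)_n$ for $n\geq 0$, but this only restates the lemma as the nonnegative-degree part of $\homology^3_\frakn(A)=0$, which in the paper is deduced \emph{from} the present lemma.

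The paper avoids the detour through $E$ and Serre vanishing altogether. It observes that $It\cdot(A/ztA)$ has the same radical as $(xt,yt)(A/ztA)$, so $\homology^3_{It}(A/ztA)=0$; hence multiplication by $zt$ on $\homology^3_{It}(A)$ is surjective. This map raises degree by one, so the single vanishing $\homology^3_{It}(A)_0=0$ propagates \emph{upward} to all $n\geq 0$ in one stroke. The missing idea in your approach is precisely to use an element of positive degree (such as $zt$) to push vanishing up, rather than trying to descend via $t^{-1}$ and then having to independently control $\homology^2(E,\strSh_E(n))$.
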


\begin{proof}
Note that $\bar{e}_3(I) = \length_R(\homology^3_{It}(A)_0)$; hence it
suffices to show that if $m \in \ints$ is such that 
$\homology^3_{It}(A)_m=0$, then 
$\homology^3_{It}(A)_n=0$ for all $n \geq m$. 
Since $It(A/(zt))$ is generated by $xt$ and $yt$, we see that 
$\homology^3_{It}(A/(zt))=0$; hence 
the multiplication map by $zt$ on $\homology^3_{It}(A)$ is surjective. 
\end{proof}

As we mentioned in the beginning of this section, the vanishing asserted in
the next proposition is in some sense the first step in the proofs of our
results.

\begin{proposition}
\label{proposition:ItohHypImpliesHthreeZero}
If $\bar{e}_3(I) = 0$, then $\homology^3_\frakn(A) = 0$.
Hence~\eqref{equation:itohTopExactSeq} yields an exact sequence
\[
0 \to  \homology^3_{It}(A) \to \homology^3_\frakm(R)[t,t^{-1}] \to 
\homology^4_{\frakn}(A) \to 0.
\]
\end{proposition}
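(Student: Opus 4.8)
The plan is to run the spectral-sequence machinery of Discussion~\ref{discussionbox:itohfacts} with the Koszul complex $L_\bullet = K_\bullet(t^{-1}, zt; A)$, exactly as in the proof of Lemma~\ref{lemma:generalFacts}\eqref{enum:generalhThreeVanishNegDegs}, but now push the degree-by-degree analysis into the non-negative degrees using the hypothesis $\bar e_3(I)=0$. Recall that $t^{-1},zt$ is a regular sequence on $A$, so $\homology_i(t^{-1},zt;A)=0$ for $i>0$ and $\homology_0 = A/(t^{-1},zt)$; hence the ${''\!E}$ spectral sequence degenerates and the only non-vanishing ${''\!E}_2$ terms are $\homology^j_\frakn(A/(t^{-1},zt))$ for $j=0,1,2$ (using $\homology^{\le 1}_\frakn(A)=0$ and dimension reasons, $A/(t^{-1},zt)$ has dimension $2$). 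On the ${'\!E}$ side, $'\!E_2^{-i,j} = \homology_i(t^{-1},zt;\homology^j_\frakn(A))$, which can be non-zero only for $(i,j)\in\{0,1,2\}\times\{2,3,4\}$, and the only differentials that can be non-zero on the ${'\!E}_2$-page are $'\!d_2^{-2,3}\colon {'\!E_2^{-2,3}}\to{'\!E_2^{0,2}}$ and $'\!d_2^{-2,4}\colon{'\!E_2^{-2,4}}\to{'\!E_2^{0,3}}$.

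First I would isolate the piece of $\homology^3_\frakn(A)$ visible in total degree, i.e. the contribution to $\homology^2_\frakn(A/(t^{-1},zt))$. Comparing the two spectral sequences gives that $\homology^2_\frakn(A/(t^{-1},zt))$ has a filtration with quotients $'\!E_\infty^{-1,3}$ and $'\!E_\infty^{-2,4}$; but $'\!E_\infty^{-1,3} = {'\!E_2^{-1,3}} = \homology_1(t^{-1},zt;\homology^3_\frakn(A))$, since no differential enters or leaves this spot. So the engine is: control $\homology^2_\frakn(A/(t^{-1},zt))$ in each degree, then feed back to $\homology_1(t^{-1},zt;\homology^3_\frakn(A))$, and finally upgrade that to vanishing of $\homology^3_\frakn(A)$ itself. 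For degrees $n\le -1$ the negative-degree vanishing is already handled by Lemma~\ref{lemma:generalFacts}, so the new content is in degrees $n\ge 0$. Here I would invoke Lemma~\ref{lemma:ItohHypImpliesHtwoVanishPositiveTwists}: under $\bar e_3(I)=0$ we have $\homology^3_{It}(A)_n=0$ for all $n\ge 0$, hence (via~\eqref{equation:itohTopExactSeq} and $\homology^3_\frakn(A)\hookrightarrow\homology^3_{It}(A)$) also $\homology^3_\frakn(A)_n = 0$ for $n\ge 0$. Combined with Lemma~\ref{lemma:generalFacts}\eqref{enum:generalhThreeVanishNegDegs} and a parallel analysis of $\homology_2(t^{-1},zt;\homology^3_\frakn(A))$, I would conclude $\homology_1(t^{-1},zt;\homology^3_\frakn(A)) = \homology_2(t^{-1},zt;\homology^3_\frakn(A)) = 0$ in all degrees.

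The final step is to promote vanishing of the Koszul homology of $t^{-1},zt$ with coefficients in $N:=\homology^3_\frakn(A)$ to vanishing of $N$. Since $N$ is a $t^{-1}$-torsion-free? — no: $N$ is a subquotient living over $A$, but crucially $t^{-1}$ and $zt$ both annihilate... more precisely I would argue as follows. We have $\homology_2(t^{-1},zt;N) = (0:_N t^{-1})\cap(0:_N zt)$ and the $\homology_1$ vanishing then forces, via the standard Koszul exact sequences, that multiplication by $t^{-1}$ and by $zt$ on $N$ behaves like a regular sequence; but $\frakn = It A + (t^{-1})A$ and $N$ is $\frakn$-torsion (it is a local-cohomology module), so every element of $N$ is killed by a power of $\frakn$, hence by a power of both $t^{-1}$ and $zt$ after localizing appropriately — this is incompatible with $t^{-1},zt$ acting regularly unless $N=0$. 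Concretely: $\homology_2(t^{-1},zt;N)=0$ says $t^{-1}$ is a non-zero-divisor on $N$, and since $N$ is $(t^{-1})$-power-torsion this gives $N=0$. I expect the main obstacle to be exactly this bookkeeping of \emph{which} degree ranges and \emph{which} differentials are controlled by \emph{which} prior result — in particular making sure the interplay between $'\!d_2^{-2,3}$ (controlled in negative degrees by Lemma~\ref{lemma:generalFacts}) and the positive-degree vanishing from Lemma~\ref{lemma:ItohHypImpliesHtwoVanishPositiveTwists} really does cover all of $\ints$ with no gap at $n=0$. Once $\homology^3_\frakn(A)=0$ is in hand, the displayed short exact sequence is immediate from~\eqref{equation:itohTopExactSeq}.
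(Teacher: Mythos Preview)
Your strategy is essentially the paper's: show $\homology_2(t^{-1},zt;\homology^3_\frakn(A))=0$ degree by degree --- Lemma~\ref{lemma:generalFacts}\eqref{enum:generalhThreeVanishNegDegs} for $n\le -1$, and the injection $\homology^3_\frakn(A)\hookrightarrow\homology^3_{It}(A)$ together with Lemma~\ref{lemma:ItohHypImpliesHtwoVanishPositiveTwists} for $n\ge 0$ --- and then deduce $\homology^3_\frakn(A)=0$. The paper does exactly this in four lines, without re-running the spectral sequence and without ever touching $\homology_1$; those detours in your proposal are unnecessary, and your claim that $\homology_1(t^{-1},zt;\homology^3_\frakn(A))=0$ in negative degrees is not supplied by Lemma~\ref{lemma:generalFacts} (which only treats $\homology_2$).

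There is one genuine slip in your last step. The assertion ``$\homology_2(t^{-1},zt;N)=0$ says $t^{-1}$ is a non-zero-divisor on $N$'' is false: $\homology_2(t^{-1},zt;N)=(0:_N t^{-1})\cap(0:_N zt)$, and its vanishing says nothing about $(0:_N t^{-1})$ alone (e.g.\ take $N=A/(t^{-1})$ with $zt$ regular on it). Even adding $\homology_1=0$ does not force $t^{-1}$ to be a non-zero-divisor. The correct (and simpler) endgame, which you nearly wrote down, is: $N=\homology^3_\frakn(A)$ is $\frakn$-torsion, hence every element is killed by a power of $t^{-1}$ and by a power of $zt$, so $N$ is $(t^{-1},zt)$-torsion; a nonzero $(t^{-1},zt)$-torsion module has nonzero socle $(0:_N(t^{-1},zt))=\homology_2(t^{-1},zt;N)$, so $\homology_2=0$ forces $N=0$. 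With that correction your argument is complete and coincides with the paper's.
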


\begin{proof}
Note that since $\homology^3_\frakn(A)$ is a $(t^{-1},zt)$-torsion module,
it is zero if and only if its submodule
$(0 :_{\homology^3_\frakn(A)} (t^{-1}, zt)A)$ is zero.
Note that 
$(0 :_{\homology^3_\frakn(A)} (t^{-1}, zt)A)
=\homology_2(t^{-1},zt; \homology^3_\frakn(A))$. Therefore 
$(0 :_{\homology^3_\frakn(A)} (t^{-1}, zt)A)_n = 0$ for all $n \leq -1$ by 
Lemma~\ref{lemma:generalFacts}\eqref{enum:generalhThreeVanishNegDegs}, 
and for all $n \geq 0$ by~\eqref{equation:itohTopExactSeq} and
Lemma~\ref{lemma:ItohHypImpliesHtwoVanishPositiveTwists}.
\end{proof}

\begin{corollary}
\label{corollary:negativeHOneOE}
$\homology^2_{It}(G)_m = \homology^1(\strSh_E(m)) = 0$ for every $m \leq 0$.
\end{corollary}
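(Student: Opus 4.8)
\textbf{Proof plan for Corollary~\ref{corollary:negativeHOneOE}.}
The plan is to deduce the vanishing of $\homology^1(\strSh_E(m))$ for $m \leq 0$ from the vanishing $\homology^3_\frakn(A) = 0$ established in Proposition~\ref{proposition:ItohHypImpliesHthreeZero}, together with the facts recorded in Discussion~\ref{discussionbox:itohfacts}. First I would identify $\homology^1(\strSh_E(m))$ cohomologically: since $E = \Proj G$ and $\strSh_E(1) = I\strSh_E$, the standard identification of sheaf cohomology on $\Proj$ with graded local cohomology (as used for $X$ in Discussion~\ref{discussionbox:itohfacts}) gives $\homology^1(E, \strSh_E(m)) = \homology^2_{G_+}(G)_m$, and since $G = A/(t^{-1})$ and $G_+$ corresponds to $ItG$, this equals $\homology^2_{It}(G)_m$ (the natural map $\homology^i_\frakn \to \homology^i_{It}$ being an isomorphism for $i \leq 2$ applies over $G$ just as over $A$, by the same exact sequence argument). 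So the two modules in the statement are identified, and it remains to show they vanish in non-positive degrees.

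Next I would use the short exact sequence $0 \to A \xrightarrow{t^{-1}} A \to G \to 0$, which is exact because $t^{-1}$ is a non-zero-divisor on $A$. Applying $\homology^\bullet_\frakn(-)$ yields the long exact sequence
\[
\cdots \to \homology^2_\frakn(A)_m \to \homology^2_\frakn(G)_m \to \homology^3_\frakn(A)_{m+1} \xrightarrow{t^{-1}} \homology^3_\frakn(A)_m \to \cdots
\]
(the grading shift coming from the degree of $t^{-1}$). Since $\homology^3_\frakn(A) = 0$ by Proposition~\ref{proposition:ItohHypImpliesHthreeZero}, the map $\homology^2_\frakn(A)_m \to \homology^2_\frakn(G)_m$ is surjective for every $m$. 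By~\eqref{equation:itohHongUlrichHTwo} we have $\homology^2_\frakn(A)_m = 0$ for all $m \leq 0$, hence $\homology^2_\frakn(G)_m = 0$ for all $m \leq 0$ as well. Combining with the identification $\homology^2_\frakn(G)_m = \homology^2_{It}(G)_m = \homology^1(\strSh_E(m))$ from the previous step completes the argument.

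The main technical point to get right is the first step — the precise identification of $\homology^1(E, \strSh_E(m))$ with $\homology^2_{It}(G)_m$ — since one must check that $\depth G$ is large enough (here $\depth_\frakn G \geq 1$, coming from $\depth_\frakn A \geq 2$ via the sequence above and~\eqref{equation:lowestTwolocalCohVanish}) so that no correction terms intrude when passing between $\Proj$-cohomology and graded local cohomology, and that the isomorphism $\homology^i_\frakn(G) \cong \homology^i_{It}(G)$ for $i \leq 2$ genuinely holds; but this is routine given the machinery already set up in Discussion~\ref{discussionbox:itohfacts}. Everything else is a direct diagram chase through the one short exact sequence, so I expect no real obstacle beyond bookkeeping of the grading shifts.
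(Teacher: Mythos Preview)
Your proposal is correct and follows essentially the same approach as the paper: both use the short exact sequence $0 \to A(1) \xrightarrow{t^{-1}} A \to G \to 0$, the vanishing from Proposition~\ref{proposition:ItohHypImpliesHthreeZero}, and the vanishing of $\homology^2$ in non-positive degrees from Discussion~\ref{discussionbox:itohfacts}. The only cosmetic difference is that the paper runs the long exact sequence in $\homology^\bullet_{It}(-)$ (using that $t^{-1}$ is a non-zero-divisor on $\homology^3_{It}(A)$, which follows from the exact sequence of Proposition~\ref{proposition:ItohHypImpliesHthreeZero}) whereas you run it in $\homology^\bullet_\frakn(-)$ and use $\homology^3_\frakn(A)=0$ directly; your version is slightly more direct.
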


\begin{proof}
Since $G = A/(t^{-1})$, there is an exact sequence
\[
\homology^2_{It}(A)(1) \stackrel{t^{-1}}{\to} \homology^2_{It}(A) \to 
\homology^2_{It}(G) \to 
\homology^3_{It}(A)(1) \stackrel{t^{-1}}{\to} \homology^3_{It}(A)
\]
with maps of degree zero. By
Proposition~\ref{proposition:ItohHypImpliesHthreeZero}, 
$t^{-1}$ is a non-zero-divisor on $\homology^3_{It}(A)$,
so we have a
surjective map $\homology^2_{It}(A)_m \to \homology^2_{It}(G)_m$  for every
$m \in \ints$. Now use~\eqref{equation:negativeHOneOX}.
\end{proof}

\begin{discussionbox}
\label{discussionbox:phim}
Consider the Koszul complex 
\[
K_\bullet: \;0 \to \strSh_X(-3) \to \strSh_X(-2)^{\oplus 3}  \to 
\strSh_X(-1)^{\oplus 3}  \to \strSh_X \to 0
\]
given by the section $(xt, yt, zt) \in \homology^0(X, \strSh_X(1))$.
Since $(x,y,z)$ is a reduction of
$\overline{I}$, this complex is exact. 
Therefore, for every $m \in \ints$,
$K_\bullet \otimes \strSh_X(m)$ is a locally free resolution of $0$, so for
every coherent $\strSh_X$-module $\calG$, 
the hypercohomology spectral sequences associated to the
complex $K_\bullet \otimes \calG(m)$
for the functor $\homology^0(X, -)$ abut to zero. 
Since $X$ has a covering by three affine open sets, $\homology^i(X, -) = 0$
for every $i \geq 3$.  
Hence the ${'\!E_1}$-page of this spectral sequence is
\[
\vcenter{\vbox{%
\xymatrix@R=0.5em{
\homology^2(\calG(m-3)) \ar[r] & 
\homology^2(\calG(m-2))^{\oplus 3} \ar[r] & 
\homology^2(\calG(m-1))^{\oplus 3} \ar[r] & 
\homology^2(\calG(m)) 
\\
\homology^1(\calG(m-3)) \ar[r] & 
\homology^1(\calG(m-2))^{\oplus 3} \ar[r] & 
\homology^1(\calG(m-1))^{\oplus 3} \ar[r]^-{\phi^{\calG}_m} & 
\homology^1(\calG(m)) 
\\
\homology^0(\calG(m-3)) \ar[r] & 
\homology^0(\calG(m-2))^{\oplus 3} \ar[r] & 
\homology^0(\calG(m-1))^{\oplus 3} \ar[r] & 
\homology^0(\calG(m)) 
}}}.
\]
We observe immediately that
\begin{equation}
\label{equation:HTwoVanishPersistence}
\text{if}\; \homology^2(\calG(m)) = 0 \;\text{then}\; \homology^2(\calG(m+1)) = 0.
\end{equation}
Further, if $\homology^2(\calG(m)) = 0$ for all $m \geq 0$, then 
the abutment of this spectral sequence to zero implies that
\begin{equation}
\label{equation:surjPhim}
\phi^{\calG}_m \;\text{is surjective for every}\; m \geq 2.
\end{equation}
Parenthetically, note that $\phi^{\strSh_X}_2$ is the map $\phi$ in the
exact sequence in
Theorem~\ref{theorem:itoh}\eqref{enum:thmitohexactseq}.
\end{discussionbox}

\begin{proposition}
\label{proposition:HtwoVanishGlobGen}
Let $\calG$ be a coherent sheaf on $X$ that is generated by global
sections, i.e., there exists a positive integer $r$ and a surjective
morphism of sheaves $\strSh_X^{\oplus r} \to \calG$. Then
$\homology^2(\calG(m)) = 0$ for all $m \geq 0$. Moreover, 
$\phi^{\calG}_m$ is surjective for every $m \geq 2$.
\end{proposition}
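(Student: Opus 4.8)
The plan is to bootstrap from the single vanishing $\homology^2(X,\strSh_X)=0$ --- which holds under our standing hypothesis, since $\bar e_3(I)=\length_R(\homology^2(X,\strSh_X))=0$ --- to the asserted vanishing in all nonnegative twists, first for $\strSh_X$ itself and then for an arbitrary globally generated $\calG$, after which the second assertion is immediate.

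First I would record that $\homology^2(\strSh_X(m))=0$ for every $m\geq 0$. This is an induction on $m$: the base case $m=0$ is the hypothesis, and the inductive step is precisely \eqref{equation:HTwoVanishPersistence} of Discussion~\ref{discussionbox:phim}, applied with $\calG=\strSh_X$. Next, given a surjection $\strSh_X^{\oplus r}\twoheadrightarrow\calG$, let $\mathcal{K}$ denote its kernel; since $X$ is Noetherian, $\mathcal{K}$ is coherent, and we have a short exact sequence $0\to\mathcal{K}\to\strSh_X^{\oplus r}\to\calG\to 0$. Tensoring with the invertible sheaf $(I\strSh_X)^m$ preserves exactness, so the long exact cohomology sequence yields
\[
\homology^2(\strSh_X(m))^{\oplus r}\longrightarrow\homology^2(\calG(m))\longrightarrow\homology^3(\mathcal{K}(m)).
\]
Since $X$ is covered by three affine open subsets, $\homology^3(X,-)=0$, so the right-hand term vanishes; combined with the previous step this forces $\homology^2(\calG(m))=0$ for all $m\geq 0$.

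For the last assertion, I would simply invoke \eqref{equation:surjPhim}: once $\homology^2(\calG(m))=0$ for all $m\geq 0$ is known, the hypercohomology spectral sequence of the Koszul complex $K_\bullet\otimes\calG(m)$ of Discussion~\ref{discussionbox:phim}, which abuts to zero, forces $\phi^{\calG}_m$ to be surjective for every $m\geq 2$.

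The argument is little more than a diagram chase, so there is no serious obstacle. The one input worth flagging --- and which I would not reprove --- is the cohomological-dimension fact $\homology^3(X,-)=0$; it is already recorded in Section~\ref{sec:prelims} and Discussion~\ref{discussionbox:phim}, resting on the morphism $X\to X'=\Proj(\bigoplus_n I^nt^n)$ being finite (hence affine, as $R$ is analytically unramified) and on $X'=D_+(xt)\cup D_+(yt)\cup D_+(zt)$ since $I=(x,y,z)$. The real content is thus packaged in the persistence statement \eqref{equation:HTwoVanishPersistence}, itself a consequence of that same Koszul spectral sequence.
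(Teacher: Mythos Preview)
Your proof is correct and follows essentially the same route as the paper: both use the persistence statement \eqref{equation:HTwoVanishPersistence} with $\calG=\strSh_X$ to get $\homology^2(\strSh_X(m))=0$ for $m\geq 0$, then use $\homology^3(X,-)=0$ to deduce a surjection $\homology^2(\strSh_X(m))^{\oplus r}\to\homology^2(\calG(m))$, and finally invoke \eqref{equation:surjPhim}. Your version is simply more explicit in naming the kernel and writing out the long exact sequence.
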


\begin{proof}
As $\homology^3(X,-)  = 0$, we have a surjective map 
\[
\homology^2(\strSh_X^{\oplus r}(m)) \to \homology^2(\calG(m))
\;\text{for every}\; m \in \ints.
\]
Since $\homology^2(\strSh_X(m)) = 0$ for every $m \geq 0$ (by hypothesis
and~\eqref{equation:HTwoVanishPersistence}), we get the first assertion.
The second one follows from~\eqref{equation:surjPhim}.
\end{proof}

\begin{proposition}
\label{proposition:HoneNonzeroOnContigSets}
We have the following:
\[
\{m : \homology^1(\strSh_X(m)) \neq 0\} = 
\{m : \homology^1(\strSh_E(m)) \neq 0\}.
\]
Moreover, if this set is non-empty, then it is of the form 
$\{1, \ldots, m\}$ for some positive integer $m$.
\end{proposition}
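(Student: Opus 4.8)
The plan is to track the finite-dimensional vector spaces $\homology^1(\strSh_X(m))$ and $\homology^1(\strSh_E(m))$ as $m$ varies, playing off the defining sequence~\eqref{equation:definingOE} against the surjectivity~\eqref{equation:surjPhim} of Discussion~\ref{discussionbox:phim} and Serre vanishing. I will repeatedly use that $\homology^1(\strSh_X(m)) = \homology^1(\strSh_E(m)) = 0$ for all $m \leq 0$ (by~\eqref{equation:negativeHOneOX} and Corollary~\ref{corollary:negativeHOneOE}), that both groups vanish for $m \gg 0$ (since $\strSh_X(1)$ and its restriction $\strSh_E(1)$ are ample), and that $\homology^2(\strSh_X(m)) = 0$ for every $m \geq 0$ (by the standing hypothesis $\bar e_3(I) = 0$ together with~\eqref{equation:HTwoVanishPersistence}); throughout, $\homology^i(X,-)$ vanishes for $i \geq 3$.

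First I would record that $\homology^2(\strSh_E(m)) = 0$ for every $m \geq 0$: the cohomology sequence of~\eqref{equation:definingOE} contains $\homology^2(\strSh_X(m)) \to \homology^2(\strSh_E(m)) \to \homology^3(X,\strSh_X(m+1))$, whose outer terms vanish. Since the spectral-sequence formalism of Discussion~\ref{discussionbox:phim} is available for an arbitrary coherent $\strSh_X$-module, I may feed it both $\calG = \strSh_X$ and $\calG = \strSh_E$ and invoke~\eqref{equation:surjPhim} to conclude that $\phi^{\strSh_X}_m$ and $\phi^{\strSh_E}_m$ are surjective for every $m \geq 2$. These surjectivities yield the two persistence statements: if $\homology^1(\strSh_X(m)) = 0$ for some $m \geq 1$ then $\homology^1(\strSh_X(m')) = 0$ for all $m' \geq m$, and likewise with $E$ in place of $X$. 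Together with vanishing in nonpositive degrees and for $m \gg 0$, this forces each of the two sets in the statement to be either empty or of the form $\{1,\ldots,m_0\}$; in particular the final clause of the proposition follows as soon as the two sets are shown to coincide.

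To prove that coincidence I would use the cohomology sequence of~\eqref{equation:definingOE} in both directions. On the one hand, the map $\homology^1(\strSh_X(m)) \to \homology^1(\strSh_E(m))$ is surjective whenever $\homology^2(\strSh_X(m+1)) = 0$, in particular for all $m \geq -1$; so $\homology^1(\strSh_E(m)) \neq 0$ implies $\homology^1(\strSh_X(m)) \neq 0$. On the other hand, suppose $m \geq 1$ satisfies $\homology^1(\strSh_X(m)) \neq 0$ but $\homology^1(\strSh_E(m)) = 0$; by the persistence statement for $E$ we then have $\homology^1(\strSh_E(m')) = 0$ for all $m' \geq m$, so the cohomology sequence of~\eqref{equation:definingOE} makes $\homology^1(\strSh_X(m'+1)) \to \homology^1(\strSh_X(m'))$ surjective for every $m' \geq m$; composing these surjections gives $\homology^1(\strSh_X(m')) \neq 0$ for all $m' \geq m$, contradicting Serre vanishing. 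Hence the two sets agree, and the description of the common set from the previous paragraph finishes the proof.

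The steps are all short diagram chases. The one place that deserves a moment's care is the application of Discussion~\ref{discussionbox:phim} to the non-locally-free sheaf $\strSh_E$: one must note that $K_\bullet \otimes \strSh_E(m)$ remains exact (because $K_\bullet \otimes \strSh_X(m)$ is a bounded complex of locally free sheaves resolving $0$, hence locally contractible) and that $\homology^2(\strSh_E(m)) = 0$ for $m \geq 0$, which is arranged at the outset. Beyond this I do not foresee a real obstacle.
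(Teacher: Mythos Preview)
Your proof is correct and follows essentially the same route as the paper's: establish persistence of vanishing for $\homology^1$ in positive degrees via the surjectivity of $\phi^\calG_m$ from Discussion~\ref{discussionbox:phim} (the paper routes this through Proposition~\ref{proposition:HtwoVanishGlobGen}, while you check $\homology^2(\strSh_E(m))=0$ directly from~\eqref{equation:definingOE}, a cosmetic difference), and then use the long exact sequence of~\eqref{equation:definingOE} together with Serre vanishing to match the two sets. Your care about exactness of $K_\bullet \otimes \strSh_E(m)$ is already handled in Discussion~\ref{discussionbox:phim}, which states the spectral sequence abuts to zero for any coherent $\calG$.
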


\begin{proof}
For each of these sets in question, if it is non-empty, then it is of the
form $\{1, \ldots, m\}$ for some positive integer $m$ (possibly different
in each case);
this follows immediately from~\eqref{equation:negativeHOneOX},
Corollary~\ref{corollary:negativeHOneOE} and
Proposition~\ref{proposition:HtwoVanishGlobGen}.

Recall that 
$\homology^1(\strSh_X(n)) = 0$ for all $n \gg 0$ and that
$\homology^2(\strSh_X(n)) = 0$ for all $n \geq 0$.
Further there is an exact sequence (see~\eqref{equation:definingOE})
\[
\to \homology^1(\strSh_X(n+1)) \to \homology^1(\strSh_X(n))
\to \homology^1(\strSh_E(n)) \to \homology^2(\strSh_X(n+1));
\]
Using these facts, one can immediately prove the following assertions:
if either of these sets given in the proposition is non-empty, then the
other is also non-empty; if the sets are non-empty, their maxima are same.
\end{proof}

We are now ready to prove Theorem~\ref{theorem:itoh}. Recall that we
have reduced the problem to the case of $I$ being generated by three
elements $x,y,z$.

\begin{proof}[Proof of Theorem~\ref{theorem:itoh}]
\eqref{enum:thmitohHthreelocalvanishes}: 
That $\homology^3_\frakn(A) = 0$ has been proved as part of
Proposition~\ref{proposition:ItohHypImpliesHthreeZero}. Since $E$ is a
Cartier divisor in $X$, $X$ is Cohen-Macaulay if and only if $E$ is
Cohen-Macaulay. Hence it suffices to show that $X$ is Cohen-Macaulay.
Let $m \in \naturals, m \geq 1$. Write $B^{(m)} = 
\oplus_{n \in \naturals} \overline{I^{mn}}t^n$ and
$A^{(m)} = \oplus_{n \in \ints} \overline{I^{mn}}t^n$.
Note that $X = \Proj B^{(m)}$. Write $u=(xt)^m, v=(yt)^m, w=(zt)^m$.
Since $x^m, y^m, z^m$ is a reduction for $\overline{I^m}$, we see that
\[
X = 
\Spec\left( (B^{(m)})_u \right) 
\bigcup 
\Spec\left( (B^{(m)})_v \right) 
\bigcup 
\Spec\left( (B^{(m)})_w \right).
\]
Notice that the natural map $(B^{(m)})_u \to (A^{(m)})_u$ is an
isomorphism; similarly for $v$ and $w$ also. Therefore, to show that $X$ is
Cohen-Macaulay, it suffices to show that $A^{(m)}$ is Cohen-Macaulay for
some $m$.

We claim that $A^{(m)}$ is Cohen-Macaulay for all $m \gg 0$. Write
$\frakn'$ for the homogeneous maximal ideal of $A^{(m)}$.  Notice that for
all $i$ and for all $m$,
\[
\homology^i_{\frakn'}(A^{(m)})_j = 
\begin{cases}
\homology^i_{\frakn}(A)_j, & \text{if}\; m \;\text{divides}\; j; \\
0, & \text{otherwise}.
\end{cases}
\]
Hence $\homology^i_{\frakn'}(A^{(m)}) = 0$ if $i=0,1,3$. To show that 
$\homology^2_{\frakn'}(A^{(m)}) = 0$ for all $m \gg 0$, note that there
exists $m_0$ such $\homology^2_{\frakn}(A)_j = 0$ for all $j \geq m_0$.
(For, $\homology^2_{\frakn}(A)_j = \homology^2_{It}(A)_j = 
\homology^1(X, \strSh_X(j))$ for all $j$ and $\strSh_X(1)$ is ample.) Hence
for all $m \geq m_0$, $\homology^2_{\frakn'}(A^{(m)}) = 0$.

\ref{theorem:itoh}\eqref{enum:thmitohboundForEtwoBar}:
It is known that 
\[
\sum_{i=0}^2 (-1)^i \bar e_i(I) \binom{n+2-i}{2-i}
= \sum_{i=0}^2 (-1)^i\length_R(\homology^i(E, \strSh_E(n)));
\]
see, for example,~\cite[p.~114]{ItohNormalHilbCoeffs92}. Therefore 
\[
\bar e_2(I) = \sum_{i=0}^2 (-1)^i\length_R(\homology^i(E, \strSh_E(-1))).
\]
We saw in Corollary~\ref{corollary:negativeHOneOE} that 
$\homology^1(E, \strSh_E(-1))=0$. Now consider the exact sequence
\[
0 \to \homology^0(X, \strSh_X) \to 
\homology^0(X, \strSh_X(-1)) \to 
\homology^0(E, \strSh_E(-1)) \to 
\homology^1(X, \strSh_X) \to  \cdots
\]
Note that 
$\homology^0(X, \strSh_X) = \homology^0(X, \strSh_X(-1)) = R$ and 
$\homology^1(X, \strSh_X) = 0$, to conclude that 
the finite-length module $\homology^0(E, \strSh_E(-1))$ is a quotient
of $R$ by a principal ideal, which implies that 
$\homology^0(E, \strSh_E(-1))=0$. Therefore
\[
\bar e_2(I) = \length_R(\homology^2(E, \strSh_E(-1))).
\]
The exact sequence used above continues further, and ends as follows: 
\[
\cdots \to \homology^2(X, \strSh_X) \to 
\homology^2(X, \strSh_X(-1)) \to 
\homology^2(E, \strSh_E(-1)) \to 0;
\]
therefore $\homology^2(X, \strSh_X(-1)) \simeq
\homology^2(E, \strSh_E(-1))$.
From the exact sequence in
Proposition~\ref{proposition:ItohHypImpliesHthreeZero}
we see that 
$\homology^2(X, \strSh_X(-1)) = \homology^3_{It}(A)_{-1} \subseteq
{\homology^3_\frakm(R)}$.
Now note that, since 
$\homology^2(X, \strSh_X(-1))  \simeq 
\homology^2(E, \strSh_E(-1))$, it is an $R/\overline{I}$-module.
Therefore $\homology^2(X, \strSh_X(-1))  \subseteq 
(0 :_{\homology^3_\frakm(R)} \overline{I})$.

\ref{theorem:itoh}\eqref{enum:thmitohexactseq}:
We see from Discussion~\ref{discussionbox:itohfacts} that the two 
given exact sequences are indeed restatements of each other;
hence we will work
with the description involving $\strSh_X$. 

The ${'\!E_1}$-page of the spectral sequence from
Discussion~\ref{discussionbox:phim} with $m=2$ and $\calG = \strSh_X$
becomes:
\[
\xymatrix@R=0.5em{
\homology^2(\strSh_X(-1)) \ar[r] & 0 \ar[r] & 0 \ar[r] & 0 
\\
0 \ar[r] & 0 \ar[r] & \homology^1(\strSh_X(1))^{\oplus 3}
\ar[r]^{\phi^{\strSh_X}_2} &
\homology^1(\strSh_X(2)) 
\\
0 \ar[r] & R \ar[r] & \overline{I}^{\oplus 3} \ar[r] & 
\overline{I^2}
}
\]
Note that $\phi^{\strSh_X}_2$ is the map $\phi$. The abutment of this
spectral sequence to zero gives the required exact sequence.

Suppose that $A$ is Cohen-Macaulay.  Then $\homology^1(X,\strSh_X(m)) =
\homology^2_{It}(A)_m = \homology^2_{\frakn}(A)_m = 0$ for every $m \in
\ints$. Conversely, if 
$\homology^1(X,\strSh_X(1))  = 0$, then the surjectivity of
$\phi_m^{\strSh_X}$ in Discussion~\ref{discussionbox:phim} 
for every $m \geq 2$ implies that 
$\homology^2_{\frakn}(A)_m =\homology^1(X,\strSh_X(m)) = 0$ for every $m
\geq 1$; that $\homology^2_{\frakn}(A)_m = 0$ for every $m \leq 0$
is~\eqref{equation:itohHongUlrichHTwo}.
Now
Theorem~\ref{theorem:itoh}\eqref{enum:thmitohHthreelocalvanishes}
implies that $A$ is Cohen-Macaulay. 

\ref{theorem:itoh}\eqref{enum:thmitohEquichar}:
Using the spectral sequence from Discussion~\ref{discussionbox:phim} with 
$(m,\calG) = (2,\strSh_X)$, $(m,\calG) = (3,\strSh_X)$ and 
$(m,\calG) = (2,\strSh_E)$
we have the following commutative diagram with exact columns and
surjective rows.
\begin{equation}
\label{equation:phidiag}
\vcenter{\vbox{%
\xymatrix@C=6em{
\homology^1(\strSh_X(2))^{\oplus 3} \ar[r]^{\phi^{\strSh_X}_3} \ar[d]_{\alpha_2}&
\homology^1(\strSh_X(3)) \ar[d]_{\alpha_3}\\
\homology^1(\strSh_X(1))^{\oplus 3} \ar[r]^{\phi = \phi^{\strSh_X}_2} \ar[d]_{\beta_1}&
\homology^1(\strSh_X(2)) \ar[d]_{\beta_2}\\
\homology^1(\strSh_{E}(1))^{\oplus 3} \ar[r]^{\phi^{\strSh_{E}}_2} \ar[d]&
\homology^1(\strSh_{E}(2)) \ar[d]\\
0 & 0
}}}
\end{equation}
Let $\eta \in \ker \beta_2 = \image{\alpha_3}$. Lift it to $\tilde{\eta}
\in \homology^1(\strSh_X(2))^{\oplus 3}$. Since
$\phi^{\strSh_X}_2(\alpha_2(\tilde{\eta})) = \eta$, we see that 
\[
\phi|_{\ker \beta_1} : {\ker \beta_1} \to {\ker \beta_2}
\]
is surjective. Snake lemma gives an exact sequence
\[
0 \to \ker\left(\beta_1|_{\ker \phi}\right) \to \ker \phi
\to  \ker \phi^{\strSh_{E}}_2 \to 0.
\]
It suffices to show that if $\homology^1(X,\strSh_X(1))  \neq 0$, then 
\begin{equation}
\label{equation:lengthKerForOE}
\length_R(\ker \phi_2^{\strSh_{E}}) \geq 3,
\end{equation}
under the hypothesis that $R$ is equicharacteristic or $\overline{I} =
\frakm$.

Note that in either of these cases, 
$G_0$ has a coefficient field, which too we denote by 
$\Bbbk$.
 Consider the $\Bbbk$-subalgebra $H$ of $G$ generated by 
$\overline{xt} := xt + \overline{I^2}, 
\overline{yt}, \overline{zt}$; since $x,y,z$ is a reduction for
$I$, $G$ is finite over $H$, and, therefore, $H$ is a polynomial ring.
The inclusion $H \subseteq G$ is a graded map under
which $\sqrt{H_+G} = \sqrt{G_+}$. Write  
$\projective^2 := \projective^2_\Bbbk = \Proj H$.
Hence we get a finite map $\nu : E \to \projective^2$.
Further, since $E$ is Cohen-Macaulay, the homogeneous localizations
$G_{(\overline{xt})}, G_{(\overline{yt})}, G_{(\overline{yt})}$ are 
finite Cohen-Macaulay modules over the corresponding homogeneous
localizations of $H$; therefore, by the Auslander-Buchsbaum formula,
$\nu_*\strSh_E$ is coherent locally free
$\strSh_{\projective^2}$-module which we will denote by $\calF$.
Further, 
$\strSh_E(1) = \nu^* \strSh_{\projective^2}(1)$. Using the projection
formula, we see that 
\[
\homology^j(E, \strSh_E(m)) = 
\homology^j(\projective^2,  \calF(m)) \;\text{for all $m$ and $j$}.
\]
Note that $\calF$ satisfies~\eqref{equation:cohTableOfFNEW}. 
Maps $\psi_2$ and $\phi^{\strSh_E}_2$ are the same.
Therefore~\eqref{equation:lengthKerForOE} follows from
Propositions~\ref{proposition:HoneNonzeroOnContigSets},
\ref{proposition:BSthyCorNEW}\eqref{proposition:BSthyCorNEWNonNegRationals}
and~\ref{proposition:BSthyCorNEW}\eqref{proposition:BSthyCorNEWLengthAtLeastThree}.
\end{proof}

\begin{remarkbox}
We could remove the restriction of equicharacteristic in
Theorem~\ref{theorem:itoh}\eqref{enum:thmitohEquichar} if we
had a weak version of Proposition~\ref{proposition:HoneNonzeroOnContigSets}
for $E_0 := \strSh_X/\frakm\strSh_X$. More precisely, suppose that we have
the following implication: if 
$\homology^1(\strSh_X(1)) \neq 0$ then  
$\homology^1(\strSh_{E_0}(1)) \neq 0$. Then, analogous
to~\eqref{equation:phidiag}, we have the diagram
\begin{equation}
\label{equation:phidiagCentral}
\vcenter{\vbox{%
\xymatrix@C=6em{
\homology^1(\frakm\strSh_X(1))^{\oplus 3} \ar[r]^{\phi^{\frakm\strSh_X}_2} 
\ar[d]_{\alpha_2}&
\homology^1(\frakm\strSh_X(2)) \ar[d]_{\alpha_3}\\
\homology^1(\strSh_X(1))^{\oplus 3} \ar[r]^{\phi = \phi^{\strSh_X}_2} \ar[d]_{\beta_1}&
\homology^1(\strSh_X(2)) \ar[d]_{\beta_2}\\
\homology^1(\strSh_{E_0}(1))^{\oplus 3} \ar[r]^{\phi^{\strSh_{E_0}}_2} \ar[d]&
\homology^1(\strSh_{E_0}(2)) \ar[d]\\
0 & 0
}}}
\end{equation}
with exact columns. 
Applying $-\otimes_R \strSh_X$ to any finite
$R$-free presentation of $\Bbbk$, we see that $\frakm \strSh_X$ is
generated by global sections; 
by Proposition~\ref{proposition:HtwoVanishGlobGen}, 
$\phi^{\frakm\strSh_X}_2$ is surjective, so the rows
of~\eqref{equation:phidiagCentral} are surjective. Now there is a finite
map $\nu : E_0 \to \projective^2$. Note that $\nu_*\strSh_{E_0}$ is a coherent
sheaf on $\projective^2$.  Arguing as in the proof of
Theorem~\ref{theorem:itoh}\eqref{enum:thmitohEquichar}, we
conclude that $\dim_\Bbbk \ker{\phi^{\strSh_{E_0}}_2} \geq 3$.
\end{remarkbox}

\begin{proof}[Proof of Corollary~\ref{corollary:cpr}]
Use
Theorem~\ref{theorem:itoh}\eqref{enum:thmitohboundForEtwoBar},
\ref{theorem:itoh}\eqref{enum:thmitohexactseq}
and~\ref{theorem:itoh}\eqref{enum:thmitohEquichar}.
\end{proof}

\begin{proof}[Proof of Theorem~\ref{theorem:lipman}]
Write $U = X \minus E = \Spec R \minus \{\frakm\}$. Then, for every $m \in
\ints$, we have exact sequences
\begin{equation}
\label{equation:lesHomologyWithSupports}
\cdots \to \homology^i_E(X, \strSh_X(m)) \to 
\homology^i(X, \strSh_X(m)) \to 
\homology^i(U, \strSh_X(m)|_U) \to \cdots.
\end{equation}
For every $m$, 
$\homology^1(U, \strSh_X(m)|_U) = 
\homology^1(U, \strSh_U) = \homology^2_{\frakm}(R) = 0$.
However we see from the exact sequence in
Proposition~\ref{proposition:ItohHypImpliesHthreeZero} that the natural map 
$\homology^2(X, \strSh_X(m)) \to \homology^2(U, \strSh_U) =
\homology^3_{\frakm}(R)$ is injective for
every $m$. Hence 
\begin{equation}
\label{equation:HtwoWithSupport}
\homology^2_E(X, \strSh_X(m)) = 0 \;\text{for every}\; m \in \ints. 
\end{equation}

We want to show that 
\begin{equation}
\label{equation:HoneWithSupport}
\homology^1_E(X, \strSh_X(m)) = 0 \;\text{for every}\; m \leq 0.
\end{equation}
From~\eqref{equation:lesHomologyWithSupports}
and~\eqref{equation:negativeHOneOX} we get an exact sequence
\[
0 \to \homology^0_E(X, \strSh_X(m)) \to 
\homology^0(X, \strSh_X(m)) \to 
\homology^0(U, \strSh_X(m)|_U) \to 
\homology^1_E(X, \strSh_X(m)) \to 0,
\]
for every $m \leq 0$.
For every $m \in \ints$, $\homology^0_E(X, \strSh_X(m)) = 0$, while for $m
\leq 0$, the middle two modules are isomorphic to $R$. Since 
$\homology^1_E(X, \strSh_X(m))$ has finite length for every $m \in \ints$,
we conclude~\eqref{equation:HoneWithSupport}. 

Now suppose that $R$ is regular. Let
$\omega_X := \homology^0 f^! R$; since $R$ is Gorenstein and $X$
Cohen-Macaulay, $\omega_X$ is dualizing sheaf on $X$.
Duality with supports~\cite[Theorem~p.188]{LipmanDesingTwoDim1978} applied
to \eqref{equation:HtwoWithSupport} and
\eqref{equation:HoneWithSupport} imply that
$\homology^1(X, \omega_X(m)) = 0$ for every $m \in \ints$ and that
$\homology^2(X, \omega_X(m)) = 0$ for every $m \geq 0$.
Therefore, the ${'\!E_1}$-page of the spectral sequence 
from Discussion~\ref{discussionbox:phim} with $m\geq3$ and $\calG =\omega_X$
has only one non-zero row:
\[
0 \to \homology^0( \omega_X(m-3)) \to 
\homology^0( \omega_X(m-2))^{\oplus 3} \to
\homology^0( \omega_X(m-1))^{\oplus 3} \to
\homology^0( \omega_X(m)) \to 0.
\]
Hence this is an exact sequence, from which we conclude that
\[
\homology^0(X, \omega_X(m)) = I \homology^0(X, \omega_X(m-1)) 
\;\text{for all}\; m \geq 3.
\]
Since $X$ is pseudo-rational, $\widetilde{I^m} = 
\homology^0(X, \omega_X(m))$ for all 
$m \geq 1$~\cite[(1.3.1)]{LipmanAdjoints94}.
\end{proof}

\section{Further remarks}
\label{sec:furtherrmks}

We now describe a few examples in which explicit calculations show that
$A$ (and $\scrR$, because of the vanishing of $\homology^2(X, \strSh_X)$)
is Cohen-Macaulay.

\begin{examplebox}
Let $(R, \frakm)$ be a three-dimensional regular local ring, and $I :=
(x,y,z)$ an $\frakm$-primary ideal. Suppose that $S := R/(x)$ is pseudo-rational
or that $R = \complex[[u,v,w]]$ and $S := R/(x)$ has an elliptic (Gorenstein)
singularity~\cite[Definition~4.12]{ReidChaptersOnSurf1997}. Let $A$ and $X$
be as earlier. We will show that $\homology^2(X, \strSh_X) = 0$ and that 
$\homology^1(X, \strSh_X(1)) = 0$; hence by
Theorem~\ref{theorem:itoh}\eqref{enum:thmitohexactseq}
$A$ is Cohen-Macaulay.

Write $J = IS$. Let 
$Y = \Proj \left(\oplus_{n \in \naturals} \overline{J^n}t^n\right)$. We
first show that
\begin{equation}
\label{equation:HOneYVanishesPosTwist}
\homology^1(Y, \strSh_Y(m))  = 0 \;\text{for every}\; m \geq 1.
\end{equation}
To prove this, consider the two cases.
If $S$ is pseudo-rational, then $\homology^1(Y, \strSh_Y)  = 0$. 
Since $(y,z)$ is a minimal reduction of $J$, the Koszul complex
\[
0 \to \strSh_Y(-2) \to \strSh_Y(-1)^{\oplus 2} \to \strSh_Y \to 0
\]
given by $(yt, zt) \in \homology^0(Y, \strSh_Y(1)^{\oplus 2})$ is exact.
Apply $-\otimes_{\strSh_Y} \strSh_Y(m)$ for a positive integer $m$ to 
get~\eqref{equation:HOneYVanishesPosTwist} in this case. 
If $S$ has elliptic singularity, 
$\dim_{\complex}\homology^1(Y, \strSh_Y)  = 1$. 
Let
$m$ be the largest integer such that $\homology^1(\strSh_Y(m)) \neq 0$.
Suppose that $m > 0$. Since $(y^m, z^m)$ is a minimal reduction of $J^m$, 
the Koszul complex
\[
0 \to \strSh_Y(-2m) \to \strSh_Y(-m)^{\oplus 2} \to \strSh_Y \to 0
\]
given by $(y^mt^m, z^mt^m) \in \homology^0(Y, \strSh_Y(m)^{\oplus 2})$ is
exact.
Apply $-\otimes_{\strSh_Y} \strSh_Y(2m)$ to get a surjective map
\[
\homology^1(\strSh_Y) \to \homology^1(\strSh_Y(m))^{\oplus 2}.
\]
Since $\dim_{\complex}\homology^1(Y, \strSh_Y)  = 1$, we see that 
$\homology^1(\strSh_Y(m))=0$ contradicting the hypothesis that $m > 0$, from
which \eqref{equation:HOneYVanishesPosTwist} follows.

Note that $Y$ is defined by $xt \in \homology^0(X, \strSh_X(1))$, so we
have an exact sequence
\[
0 \to \strSh_X(-1) \to \strSh_X \to \strSh_Y \to 0,
\]
whence we obtain an injective map $\homology^1(\strSh_X(1)) \to
\homology^1(\strSh_Y(1))$, showing that 
$\homology^1(\strSh_X(1))  = 0$. Similarly, for every $m \geq 0$, we get
isomorphisms
$\homology^2(\strSh_X(m)) \to \homology^2(\strSh_X(m+1))$, from which we
conclude that  $\homology^2(\strSh_X(m))  = 0$ for every $m \geq 0$.
\end{examplebox}

\begin{examplebox}
Let $(R, \frakm)$ be a three-dimensional Cohen-Macaulay analytically
unramified domain. Let $I$ be an $\frakm$-primary ideal. 
\emph{Suppose} that there exists a non-singular scheme $Z$ that is
projective over $R$ such that $I\strSh_Z$ is invertible.
As earlier, set $X = \Proj \left(\oplus_{n \in \naturals}
\overline{I^n}t^n\right)$, and let $f: X \to \Spec R$ be the natural
morphism. Then there is a morphism $g: Z \to X$ such that the morphism $Z
\to \Spec R$ is the composite $fg$.
We compute $\homology^*(Z, I^m\strSh_Z)$, $m \geq 0$, using the spectral
sequence for the composite $fg$.
Since $I\strSh_Z = g^*I\strSh_X$, we use the projection formula to see that 
$R^ig_*I^m\strSh_Z = (R^ig_*\strSh_Z)(m)$ for all integers $m$.
Thus the spectral sequence gives an exact sequence
\[
0 \rightarrow \homology^1(X,\strSh_X(m)) \rightarrow 
\homology^1(Z, I^m\strSh_Z) \rightarrow 
\homology^0(X, (R^1g_*\strSh_Z)(m)) \rightarrow 
\homology^2(X, \strSh_X(m)) \rightarrow 
\homology^2(Z, I^m\strSh_Z).
\]
Further suppose that $\homology^1(Z, \strSh_Z) = 
\homology^2(Z, \strSh_Z) = 0$. (This would hold, e.g., if $R$ has rational
singularities.) Thus we have an isomorphism
\[
\homology^0(X, R^1g_*\strSh_Z) \stackrel{\simeq}{\to}
\homology^2(X, \strSh_X).
\]
Now suppose that $\homology^1(Z, I^m\strSh_Z)  = 0$ for all $m \gg 0$.
Since $\homology^2(X, \strSh_X(m))  = 0$ for all $m \gg 0$, we see that 
$\homology^0(X, (R^1g_*\strSh_Z)(m)) = 0$ for all $m \gg 0$. Since 
$\strSh_X(1)$ is ample, this means that
$R^1g_*\strSh_Z = 0$, and, hence, that
$\homology^2(X, \strSh_X)=0$. 

In this context, suppose that $R$ is regular and that $I$ is finitely
supported, i.e., $Z$ can be obtained from $\Spec R$ by a finite sequence of
blow-ups at
closed points.  Then one has
that $\homology^1(Z, I^m\strSh_Z) = 0$ for every $m \geq
1$~\cite[Corollary~3.1(ii)]{LipmanVanishingForFinitelySupported2009}.
Hence $\homology^2(X, \strSh_X)= \homology^1(X, \strSh_X(m)) = 0$ for every
$m \geq 1$, and $A$ is Cohen-Macaulay.
\end{examplebox}

\begin{examplebox}
Let $(R, \frakm)$ a three-dimensional Cohen-Macaulay analytically
unramified ring such that $\bar e_3(\frakm) = 0$. Set $A =
\oplus_{n \in \ints}\overline{\frakm^n}t^n$ and 
$X = \Proj (\oplus_{n \in \naturals}\overline{\frakm^n}t^n)$. 
Suppose that the associated graded ring 
$\oplus_{n \in \naturals} \frakm^n/\frakm^{n+1}$ is reduced. Then
$\overline{\frakm^n} = \frakm^n$ for all $n \in \naturals$. (To see this,
suppose, by way of contradiction, that there exists $n \in \naturals$ and 
$a \in \overline{\frakm^n} \minus \frakm^n$. Let $d$ be the largest integer
such that $a \in \frakm^d$. Suppose that
\[
a^r + a_1a^{r-1} + \cdots + a_r = 0
\]
with $a_i \in \frakm^{ni}$ for all $1 \leq i \leq r$. Then $a^r \in
\frakm^{rd+(n-d)i} \subseteq \frakm^{rd+1}$, implying that the $a$ is
nilpotent in the associated graded ring. This argument works for all
(radical) ideals, not just $\frakm$.) Therefore $A = R[\frakm t, t^{-1}]$,
$X$ is the blow-up of $\Spec R$ at the closed point, and $E$ is reduced.
Suppose further that $R$ is normal. Then $E$ is connected. Thus
$\homology^0(E, \strSh_E)$ is a reduced local finite $\Bbbk$-algebra, i.e,
a finite field extension of $\Bbbk$. If additionally $\Bbbk$ is
algebraically closed, then 
$\homology^0(E, \strSh_E)=\Bbbk$, so
$\homology^1(X, \frakm\strSh_X) = 0$. 
By Theorem~\ref{theorem:itoh}\eqref{enum:thmitohexactseq}
$A$ is Cohen-Macaulay.
\end{examplebox}

\newcommand{\etalchar}[1]{$^{#1}$}
\def\cfudot#1{\ifmmode\setbox7\hbox{$\accent"5E#1$}\else
  \setbox7\hbox{\accent"5E#1}\penalty 10000\relax\fi\raise 1\ht7
  \hbox{\raise.1ex\hbox to 1\wd7{\hss.\hss}}\penalty 10000 \hskip-1\wd7\penalty
  10000\box7}

\end{document}